\newtheorem{thm}{Theorem}
\newtheorem{lem}[thm]{Lemma}
\newtheorem{cor}[thm]{Corollary}
\newtheorem{defi}[thm]{Definition}
\newtheorem{rk}[thm]{Remark}
\newcommand{\rr}{{\mathbb{R}}}
\newcommand{\cF}{{\mathcal{F}}}
\newcommand{\cL}{{\mathcal{L}}}
\newcommand{\cP}{{\mathcal{P}}}
\newcommand{\e}{\epsilon}
\newcommand{\vip}{\vskip.12cm}
\newcommand{\indiq}{{\bf 1}}
\newcommand{\E}{\mathbb{E}}
\newcommand{\PR}{\mathbb{P}}
\newcommand{\intot}{\int_0^t }
\newcommand{\dd}{{\rm d}}
\newcommand{\sg}{{\rm sg}}
\newcommand{\hX}{\hat X}
\newcommand{\hV}{\hat V}
\newcommand{\bA}{\bar A}
\newcommand{\btau}{\bar \tau}
\begin{document}

\title[Critical kinetic Fokker-Planck equations]
{One dimensional critical Kinetic Fokker-Planck equations, Bessel and stable processes}

\author{Nicolas Fournier and Camille Tardif}

\address{Sorbonne Universit\'e - LPSM, Campus Pierre et Marie Curie, 
Case courrier 158, 4 place Jussieu, 75252 PARIS CEDEX 05,
{\tt nicolas.fournier@sorbonne-universite.fr, camille.tardif@sorbonne-universite.fr}.}

\thanks{We warmly thank Quentin Berger for illuminating discussions. This research was supported 
by the French  ANR-17- CE40-0030 EFI}

\subjclass[2010]{60J60, 35Q84, 60F05}

\keywords{Kinetic diffusion process, Kinetic Fokker-Planck equation, heavy-tailed equilibrium, anomalous 
diffusion phenomena, Bessel processes, stable processes, local times, central limit theorem, homogenization.}

\begin{abstract}
We consider a particle moving in one dimension, its 
velocity being a reversible diffusion process,
with constant diffusion coefficient, of which the invariant measure behaves like $(1+|v|)^{-\beta}$ for 
some $\beta>0$.
We prove that, under a suitable
rescaling, the position process resembles a Brownian motion if $\beta\geq 5$,
a stable process if $\beta\in [1,5)$ and an integrated symmetric Bessel process if $\beta\in (0,1)$.
The critical cases $\beta=1$ and $\beta=5$ require special rescalings.
We recover some results of \cite{np,cnp,lp} and \cite{bak} with an alternative approach.
\end{abstract}

\maketitle

\section{Introduction and results}
\setcounter{equation}{0}

We consider the following one-dimensional stochastic kinetic model:
\begin{equation}\label{eds}
V_t = V_0 + B_t - \frac{\beta}{2} \intot F(V_s) \dd s\quad \hbox{and} \quad X_t=X_0+\intot V_s \dd s.
\end{equation}
Here $(B_t)_{t\geq 0}$ is a Brownian motion independent of the initial condition $(X_0,V_0)$.
We assume that $\beta>0$ and that the force is of the form
\begin{equation}\label{condi}
F=-\frac{\Theta'}{\Theta}, \hbox{ for some even $\Theta:\rr\mapsto (0,\infty)$ 
of class $C^2$ satisfying $\lim_{|v|\to \infty} |v|\Theta(v)=1$.}
\end{equation} 
The typical example we have in mind is $F(v)=v/(1+v^2)$, which corresponds to
$\Theta(v)=(1+v^2)^{-1/2}$.

\vip

The drift $F$ being $C^1$, \eqref{eds} classically has a unique (possibly local) strong solution, 
and we will see that it is global.
An invariant measure $\mu_\beta$ of the velocity process $(V_t)_{t\geq 0}$ must solve 
$\frac12 \mu_\beta''+\frac\beta2(F \mu_\beta)'=0$ in the sense of distributions.
The unique (up to constants) solution is given by 
$$
\mu_\beta(\dd v)=c_\beta[\Theta(v)]^\beta\dd v,
$$
and we choose $c_\beta^{-1}=\int_\rr [\Theta(v)]^\beta \dd v<\infty$ if $\beta>1$ and $c_\beta=1$ 
if $\beta \in (0,1]$.

\subsection{Main result}\label{mrc}

Our goal is to describe the large time behavior of the position process $(X_t)_{t\geq 0}$.
For each $\beta\geq 1$, we define the constant $\sigma_\beta>0$ as follows:

\vip

$\bullet$ $\sigma_\beta^2=8 c_\beta\int_0^\infty \Theta^{-\beta}(v)[\int_v^\infty u\Theta^\beta(u)\dd u]^2 \dd v$ if 
$\beta>5$,

\vip

$\bullet$ $\sigma_5^2=4c_5 /27$,

\vip

$\bullet$ $\sigma_\beta^\alpha = 3^{1-2\alpha} 2^{\alpha-1} c_\beta \pi / [(\Gamma(\alpha))^2\sin(\pi\alpha/2)]$, 
where $\alpha=(\beta+1)/3$, if $\beta \in (1,5)$,

\vip

$\bullet$ $\sigma_1^{2/3}= 2^{2/3}3^{-5/6} \pi/[\Gamma(2/3)]^2$.

\vip

Consider a family $((Z^\e_{t})_{t\geq 0})_{\e\geq 0}$ of processes.
We write $(Z^\e_{t})_{t\geq 0}\stackrel{f.d.}\longrightarrow (Z_t^0)_{t\geq 0}$ if for all 
finite subset $S\subset [0,\infty)$ the vector $(Z^\e_{t})_{t\in S}$ goes in law to $(Z_t^0)_{t\in S}$
as $\e\to 0$.
We write $(Z^\e_{t})_{t\geq 0}\stackrel{d}\longrightarrow (Z_t)_{t\geq 0}$ if the convergence in law holds 
in the usual sense of continuous processes. Here is our main result.

\begin{thm}\label{mr}
Fix $\beta>0$ and consider the solution $(X_t,V_t)_{t\geq 0}$ to \eqref{eds}.
Let $(B_t)_{t\geq 0}$ be a Brownian motion, let $(S^{(\alpha)}_t)_{t\geq 0}$ be a symmetric stable process 
with index $\alpha \in (0,2)$
such that $\E[\exp(i\xi S^{(\alpha)}_t)]=\exp(-t |\xi|^\alpha)$ and let $(U^{(\delta)}_t)_{t\geq 0}$ be
a symmetric Bessel process of dimension $\delta \in (0,1)$, see Definition \ref{bs}.

\vip
(a) If $\beta>5$, $(\e^{1/2} X_{t/\e} )_{t\geq 0} \stackrel{f.d.}\longrightarrow (\sigma_\beta B_t)_{t\geq 0}$.

\vip

(b) If $\beta = 5$, $(\e^{1/2}|\log\e|^{-1/2} X_{t/\e} )_{t\geq 0}\stackrel{f.d.}
\longrightarrow (\sigma_5 B_t)_{t\geq 0}$.

\vip

(c) If $\beta \in (1,5)$, $(\e^{1/\alpha} X_{t/\e} )_{t\geq 0}\stackrel{f.d.}
\longrightarrow (\sigma_\beta S^{(\alpha)}_t)_{t\geq 0}$, where $\alpha=(\beta+1)/3$.

\vip

(d) If $\beta=1$, $(|\e \log \e|^{3/2} X_{t/\e} )_{t\geq 0}\stackrel{f.d.}
\longrightarrow (\sigma_1 S^{(2/3)}_t)_{t\geq 0}$.

\vip

(e) If $\beta \in (0,1)$,  $(\e^{1/2}V_{t/\e},\e^{3/2} X_{t/\e})_{t\geq 0}\stackrel{d}\longrightarrow 
(U^{(1-\beta)}_t,\int_0^t U^{(1-\beta)}_s\dd s)_{t\geq 0}$.
\end{thm}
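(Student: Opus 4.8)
The unifying starting point is the one–dimensional structure of the velocity diffusion $(V_t)$, whose generator $Lf=\frac12 f''-\frac\beta2 Ff'$ is in divergence form $Lf=\frac1{2\mu_\beta}(\mu_\beta f')'$ for the reversible measure $\mu_\beta(\dd v)=c_\beta\Theta^\beta(v)\dd v$. Its scale function $\mathfrak s(v)=\int_0^v\Theta^{-\beta}(u)\dd u$ satisfies $\mathfrak s(v)\sim\sg(v)|v|^{\beta+1}/(\beta+1)$, so $V$ is recurrent for every $\beta>0$, positive recurrent iff $\beta>1$, and null recurrent for $\beta\in(0,1]$. Near infinity $F(v)\sim 1/v$, so by \eqref{eds} the process solves $\dd V_t\approx\dd B_t-\frac\beta{2V_t}\dd t$, i.e. it behaves like a Bessel process of dimension $1-\beta$; this comparison is the source of every threshold below and the engine of the sharp excursion estimates. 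The plan is to separate three mechanisms: a central limit theorem for the additive functional $X=\intot V_s\dd s$ when its asymptotic variance is finite; a heavy–tailed excursion (stable) limit when it is not; and a genuine scaling limit of $V$ itself in the most recurrent regime.

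\textbf{Parts (a) and (b).} The integrand $v$ is $\mu_\beta$–centred (as $v\mu_\beta$ is odd), and the Poisson equation $-Lg=v$ is solved explicitly by $g'(v)=2\Theta^{-\beta}(v)\int_v^\infty u\Theta^\beta(u)\dd u$. The candidate limiting variance is the Dirichlet energy $\int(g')^2\dd\mu_\beta=8c_\beta\int_0^\infty\Theta^{-\beta}(v)[\int_v^\infty u\Theta^\beta(u)\dd u]^2\dd v$; since $\Theta^\beta\sim|v|^{-\beta}$ gives $g'(v)\sim \tfrac{2}{\beta-2}v^2$, the integrand is of order $|v|^{4-\beta}$ and the energy is finite exactly when $\beta>5$, equal to $\sigma_\beta^2$. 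For $\beta>5$ I would write $X_{t/\e}=g(V_0)-g(V_{t/\e})+M_{t/\e}$ with $M_t=\int_0^t g'(V_s)\dd B_s$ of bracket $\int_0^t(g')^2(V_s)\dd s$, invoke the ergodic theorem to get $\e\langle M\rangle_{t/\e}\to t\sigma_\beta^2$, note that the boundary terms vanish after multiplication by $\e^{1/2}$, and conclude by the martingale central limit theorem that $(\e^{1/2}X_{t/\e})\stackrel{f.d.}\longrightarrow(\sigma_\beta B_t)$. At $\beta=5$ the energy diverges logarithmically (the integrand is $\sim|v|^{-1}$), so the same decomposition applies with a truncated Poisson solution and $\e\langle M\rangle_{t/\e}\sim t\sigma_5^2|\log\e|$, which produces the extra $|\log\e|^{-1/2}$ normalisation and the Gaussian limit of (b).

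\textbf{Parts (c) and (d).} When $\beta<5$ the variance is infinite and the fluctuations of $X$ are dominated by rare long excursions of $V$ to large $|v|$. I would install a regeneration structure by cutting the trajectory at successive returns of $V$ to $0$, yielding i.i.d. cycles with duration $\btau$ and signed reward $R=\int_{\mathrm{cycle}}V_s\dd s$, symmetric by the evenness of $\Theta$. The Bessel comparison gives the crucial tail: an excursion reaching height $h$ lasts a time of order $h^2$, contributes of order $h^3$ to $X$, and occurs with probability of order $\mathfrak s(h)^{-1}\sim h^{-(\beta+1)}$; hence $\PR(|R|>r)\sim\kappa\,r^{-\alpha}$ with $\alpha=(\beta+1)/3\in(0,2)$. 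For $\beta\in(1,5)$ the cycles have finite mean duration $m$, so $X_{t/\e}$ is a random sum of $\approx(t/\e)/m$ symmetric rewards in the domain of attraction of $S^{(\alpha)}$, and the stable functional limit theorem yields $(\e^{1/\alpha}X_{t/\e})\stackrel{f.d.}\longrightarrow(\sigma_\beta S^{(\alpha)}_t)$ with $\sigma_\beta^\alpha$ equal to $\kappa/m$ times the universal symmetric–stable constant $\pi/[\Gamma(\alpha)\sin(\pi\alpha/2)]$; the remaining $\Gamma(\alpha)$ and the factors $3^{1-2\alpha}$ (from the $h^3$ reward) and $2^{\alpha-1}$ (two–sided tails) then emerge from the explicit excursion constant $\kappa$. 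At $\beta=1$ the measure $\mu_1$ is infinite and $m=\infty$: $V$ is null recurrent, the number of excursions up to time $t/\e$ grows only like a power with a logarithmic correction, and the excursion length is itself heavy tailed. Controlling the joint law of the local time at $0$ and of the reward sum is what produces the anomalous rate $|\e\log\e|^{3/2}$ and the index $2/3$ limit of (d).

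\textbf{Part (e) and conclusion.} For $\beta\in(0,1)$ the velocity itself has a non–degenerate diffusive limit. Setting $\hV^\e_t=\e^{1/2}V_{t/\e}$, Brownian scaling turns $\e^{1/2}\dd B_{t/\e}$ into a Brownian increment while $\e^{-1/2}\frac\beta2 F(\e^{-1/2}\hV^\e_t)\to\frac\beta2/\hV^\e_t$ because $F(w)\sim1/w$; thus in the limit $\hV^\e$ solves the symmetric Bessel stochastic differential equation of dimension $1-\beta$, and $\e^{3/2}X_{t/\e}=\int_0^t\hV^\e_s\dd s$ converges by continuous mapping, giving $(U^{(1-\beta)}_t,\int_0^tU^{(1-\beta)}_s\dd s)$ of Definition \ref{bs}. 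Here convergence holds in the strong ($\stackrel{d}\longrightarrow$) sense since both limits are continuous; in (a)--(d) only finite–dimensional convergence can hold, because the continuous process $X$ cannot converge in any Skorokhod sense to the jump process $S^{(\alpha)}$, and the f.d. statement follows from the one–time convergence together with the asymptotic independence of increments over disjoint macroscopic intervals, a consequence of the fast mixing (resp. regeneration) of $V$. I expect the main obstacle to be twofold: first, the sharp reward tail $\PR(|R|>r)\sim\kappa r^{-\alpha}$ with the exact constant, which requires upgrading the Bessel comparison near infinity to a genuine asymptotic and relating the excursion measure precisely to $c_\beta$ and $\mu_\beta$; and second, the null–recurrent boundary case $\beta=1$, where the coupling between the slowly growing number of excursions and the heavy–tailed rewards must be resolved to extract the logarithmic rate. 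The rigorous passage to the symmetric Bessel limit in (e), including its delicate behaviour at the origin, is a further technical point that the Bessel comparison nonetheless makes natural.
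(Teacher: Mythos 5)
Your part (a) coincides with the paper's proof (Poisson equation $g$, martingale decomposition, ergodic theorem for the bracket, martingale CLT). For parts (c)--(d), however, you adopt the excursion/renewal route of Barkai--Aghion--Kessler, which the authors describe in their introduction and deliberately \emph{abandon}: instead, they represent $(V_{t/\e},X_{t/\e})$ exactly in law as a space--time transform of a single Brownian motion through the scale function $h$ (Lemma \ref{ex}), prove almost sure uniform convergence of the relevant additive functionals --- $A^\e_t\to L^0_t$ and $\e^{1/\alpha}H^\e_t\to cK_t$, via the occupation times formula and joint continuity of Brownian local times (Lemma \ref{tloc}) --- and then invoke the Biane--Yor theorem (Theorem \ref{ss}) identifying $(K_{\tau_t})_{t\geq 0}$ as a symmetric $\alpha$-stable process with explicit constant $\kappa_\alpha$. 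That route needs no special functions, yields the constants for every $\Theta$ satisfying \eqref{condi}, and treats the critical cases $\beta=1,5$ by the same local-time machinery with logarithmic choices of $a_\e$.

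Measured against that, your proposal has genuine gaps at precisely the hardest points. First, the two steps you yourself flag are the whole difficulty of the excursion route and are left open: the sharp tail $\PR(|R|>r)\sim\kappa r^{-\alpha}$ \emph{with its constant} (in \cite{bak} this requires explicit modified-Bessel computations, and for a general force as in \eqref{condi} it would have to be redone), and the entire case $\beta=1$, for which you describe the expected phenomenon but give no argument; note also that ``cycles cut at successive returns of $V$ to $0$'' are ill-defined for a diffusion, whose zero set is perfect --- you would need It\^o excursion theory or downcrossing cycles to set up the regeneration. Second, in part (e) the identification ``in the limit $\hV^\e$ solves the symmetric Bessel SDE of dimension $1-\beta$'' does not work as stated: the drift $\beta/(2v)$ is non-integrable at the origin, the SDE \eqref{tb} admits no standard well-posedness theory there, and for $\delta\in(0,1)$ the Bessel process is not a semimartingale solving it (the drift exists only as a principal value). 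The paper explicitly calls exactly this argument ``completely informal'' and instead proves, with $a_\e=\e^{(\beta+1)/2}$, the a.s.\ uniform convergence $A^\e\to\bA$, hence $\tau^\e\to\btau$, matching the time-change \emph{construction} of Definition \ref{bs}; the limit identification, which is the crux of (e), is thus missing from your sketch. Third, in part (b) the key step is to show $\e|\log\e|^{-1}\int_0^{t/\e}[g'(V_s)]^2\dd s\to\sigma_5^2t$ in probability although $(g')^2\notin L^1(\mu_5)$, so no ergodic theorem applies; your ``truncated Poisson solution'' does not address this (the centered function is $v$ itself and cannot be truncated), whereas the paper extracts the $|\log\e|$ growth through the time-change, the occupation times formula and local-time continuity (Lemma \ref{tloc}-(d), Remark \ref{aaa}).

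Two smaller inaccuracies: your claim that only f.d.\ convergence can hold in (a)--(b) ``because the limit jumps'' is wrong there --- the limit is Brownian motion; the paper's obstruction is merely that $\sup_{t\in[0,1]}\e^{1/2}|g(V_{t/\e})|$ is not controlled. And the reduction from general initial conditions to $(X_0,V_0)=(0,0)$, handled in the paper by Lemma \ref{ci}, is absent from your plan.
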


We will deduce the following decoupling between the position and the velocity.

\begin{cor}\label{coco}
Fix $\beta>1$, adopt the same notation as in Theorem \ref{mr} and consider a $\mu_\beta$-distributed
random variable $\bar V$ independent of everything else.\vip

(a) If $\beta>5$, for each $t>0$, $(\e^{1/2} X_{t/\e},V_{t/\e}) \stackrel{d}\longrightarrow 
(\sigma_\beta B_t,\bar V)$.

\vip

(b) If $\beta = 5$, for each $t>0$, $(\e^{1/2}|\log\e|^{-1/2} X_{t/\e}, V_{t/\e})\stackrel{d}
\longrightarrow (\sigma_5 B_t, \bar V)$.

\vip

(c) If $\beta \in (1,5)$, for each $t>0$, $(\e^{1/\alpha} X_{t/\e},V_{t/\e}) \stackrel{d}
\longrightarrow (\sigma_\beta S^{(\alpha)}_t,\bar V)$, where $\alpha=(\beta+1)/3$.
\end{cor}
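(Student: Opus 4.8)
The plan is to combine three ingredients: Theorem \ref{mr}, which already yields the convergence of the rescaled position alone; the fact that for $\beta>1$ the velocity diffusion $(V_t)_{t\ge0}$ is positive recurrent with invariant \emph{probability} measure $\mu_\beta$, so that $V_{t/\e}$ converges in law to $\bar V$ as $\e\to0$; and a decorrelation argument expressing that the rescaled position, being an average of the velocity over a time window of length of order $1/\e$, becomes asymptotically independent of the present velocity $V_{t/\e}$, which only feels the last $O(1)$ units of time. Write $a_\e$ for the scaling factor in each case (so $a_\e=\e^{1/2}$, $\e^{1/2}|\log\e|^{-1/2}$ or $\e^{1/\alpha}$, and $a_\e\to0$ in all three), let $\mathcal X_t$ be the corresponding limit ($\sigma_\beta B_t$, $\sigma_5 B_t$ or $\sigma_\beta S^{(\alpha)}_t$) with characteristic function $\phi_t(\xi)=\E[e^{i\xi \mathcal X_t}]$, and set $\hat\mu_\beta(\eta)=\int_\rr e^{i\eta v}\mu_\beta(\dd v)=\E[e^{i\eta\bar V}]$. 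By L\'evy's continuity theorem it then suffices to prove, for all $\xi,\eta\in\rr$ and each fixed $t>0$, that $\E[\exp(i\xi a_\e X_{t/\e}+i\eta V_{t/\e})]\to\phi_t(\xi)\,\hat\mu_\beta(\eta)$, the product form encoding precisely the asserted independence.

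Fix $r>0$ and put $s_\e=t/\e-r$. First I would trade $X_{t/\e}$ for $X_{s_\e}$: since $X_{t/\e}=X_{s_\e}+\int_{s_\e}^{t/\e}V_u\dd u$ and $|e^{ix}-1|\le\min(2,|x|)$, the two characteristic functions differ by at most $\E[\min(2,|\xi|a_\e\int_{s_\e}^{t/\e}|V_u|\dd u)]$. Because $\mathrm{law}(V_{s_\e})\to\mu_\beta$ the family $(V_{s_\e})_\e$ is tight, and the diffusion does not explode, so $\int_{s_\e}^{t/\e}|V_u|\dd u=\int_0^r|V_{s_\e+u}|\dd u$ is tight; multiplied by $a_\e\to0$ it tends to $0$ in probability, and the error vanishes as $\e\to0$ for fixed $r$. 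Next, the Markov property at time $s_\e$ gives $\E[e^{i\eta V_{t/\e}}\mid\cF_{s_\e}]=g_r(V_{s_\e})$, where $g_r(v)=\E_v[e^{i\eta V_r}]$, and since $a_\e X_{s_\e}$ is $\cF_{s_\e}$-measurable, $\E[\exp(i\xi a_\e X_{s_\e}+i\eta V_{t/\e})]=\E[\exp(i\xi a_\e X_{s_\e})\,g_r(V_{s_\e})]$.

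I would then decouple by writing $g_r(V_{s_\e})=\hat\mu_\beta(\eta)+(g_r(V_{s_\e})-\hat\mu_\beta(\eta))$. The contribution of $\hat\mu_\beta(\eta)$ is $\hat\mu_\beta(\eta)\,\E[\exp(i\xi a_\e X_{s_\e})]$, which converges to $\hat\mu_\beta(\eta)\,\phi_t(\xi)$: indeed $a_\e X_{s_\e}=a_\e X_{t/\e}-a_\e\int_{s_\e}^{t/\e}V_u\dd u$ converges in law to $\mathcal X_t$ by Theorem \ref{mr} and Slutsky. The remaining contribution is bounded in modulus by $\E[|g_r(V_{s_\e})-\hat\mu_\beta(\eta)|]$. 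For fixed $r$ the map $v\mapsto|g_r(v)-\hat\mu_\beta(\eta)|$ is bounded by $2$ and continuous (Feller property), so, using $\mathrm{law}(V_{s_\e})\to\mu_\beta$, this expectation converges to $\int_\rr|g_r(v)-\hat\mu_\beta(\eta)|\,\mu_\beta(\dd v)$ as $\e\to0$. Finally, convergence to equilibrium of the velocity diffusion gives $g_r(v)\to\hat\mu_\beta(\eta)$ as $r\to\infty$ for every $v$, so this integral tends to $0$ as $r\to\infty$ by dominated convergence. Taking $\e\to0$ and then $r\to\infty$ yields $\limsup_{\e\to0}|\E[\exp(i\xi a_\e X_{t/\e}+i\eta V_{t/\e})]-\phi_t(\xi)\hat\mu_\beta(\eta)|\le\int_\rr|g_r(v)-\hat\mu_\beta(\eta)|\,\mu_\beta(\dd v)\to0$, which proves the claim.

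The hard part will be the decorrelation step, namely showing that $g_r(V_{s_\e})$ collapses onto the deterministic constant $\hat\mu_\beta(\eta)$; this rests on the ergodicity (convergence to equilibrium) of the reversible velocity diffusion, which is exactly the positive recurrence available for $\beta>1$, combined with the fact that $V_{s_\e}$ is itself already close to $\mu_\beta$. The supporting technical point is the tightness of $\int_0^r|V_{s_\e+u}|\dd u$: for $\beta\in(1,2]$ the measure $\mu_\beta$ has infinite first moment, so this cannot come from an $L^1$ bound and must instead be read off the weak convergence $V_{s_\e}\to\mu_\beta$ together with non-explosion (for instance from weak convergence of the shifted paths $(V_{s_\e+u})_{u\in[0,r]}$ to the stationary diffusion and the continuous mapping theorem). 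Both ergodicity and non-explosion of $(V_t)_{t\ge0}$ are the one-dimensional reversible-diffusion facts underlying the whole analysis, so once they are in hand the decoupling reduces to organizing the double limit $\e\to0$, $r\to\infty$ above.
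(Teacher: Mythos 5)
Your proof is correct, and it takes a genuinely different route from the paper's, although both rest on the same two pillars: the Markov property combined with relaxation of the velocity to equilibrium (available because $\mu_\beta$ is a probability measure when $\beta>1$), and Theorem \ref{mr} for the position. The difference is where the conditioning cut is placed. You cut at $s_\e=t/\e-r$ with $r$ fixed, a lag of fixed \emph{real} length; the paper cuts at $(t-h)/\e$ with $h\in(0,t)$ fixed, a lag of real length $h/\e\to\infty$. In your version the position error, namely your rescaling factor $a_\e$ times $\int_{s_\e}^{t/\e}|V_u|\dd u$, is nearly free (a window of fixed length $r$, killed by $a_\e\to0$, granted tightness), but the decoupling $g_r(V_{s_\e})\approx\hat\mu_\beta(\eta)$ is only approximate at fixed $r$, so you need the double limit $\e\to0$ then $r\to\infty$, plus the Feller property (continuity of $v\mapsto\E_v[e^{i\eta V_r}]$) and the weak convergence $\cL(V_{s_\e})\to\mu_\beta$. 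In the paper's version the decoupling is exact in the limit $\e\to0$ for fixed $h$, since the semigroup acts over a time $h/\e\to\infty$ (this is its Step 1, which uses convergence to equilibrium in total variation, from Kallenberg), but the position increment over the rescaled window $[t-h,t]$ is then \emph{not} negligible: it is controlled through the two-time finite-dimensional convergence of Theorem \ref{mr} together with stochastic continuity of the limit process, and vanishes only as $h\downarrow 0$. So your argument consumes less of Theorem \ref{mr} (only one-time marginals, plus Slutsky) and only weak-ergodic inputs, but it requires two standard diffusion-theoretic facts the paper never needs: the Feller/stability property of the SDE \eqref{eds}, used both for the continuity of $g_r$ and for the tightness of $\int_0^r|V_{s_\e+u}|\dd u$ via weak convergence of the shifted paths. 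Your remark that no $L^1$ bound is available for $\beta\in(1,2]$ (where $\mu_\beta$ has infinite first moment) is accurate, and note also that $F$ is not assumed bounded under \eqref{condi}, so the path-space argument you sketch is indeed the right way to get that tightness. The paper's total-variation route, by contrast, needs no continuity at all and even accommodates bounded measurable test functions of the velocity. Both proofs are complete modulo these standard facts; yours is lighter on Theorem \ref{mr} and heavier on regularity of the velocity diffusion.
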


\subsection{The Fokker-Planck equation}

Consider a solution $(X_t,V_t)_{t\geq 0}$ to \eqref{eds} and denote, for each $t\geq 0$, 
by $f_t\in \cP(\rr\times\rr)$ the law of $(X_t,V_t)$.
We write the paragraph doing as if 
$f_t$ had a density, that we still 
denote by $f_t$, for each $t>0$. This probably always holds true, but this is not the subject of the paper.
Of course, everything can be written, with heavier notation, when $f_t$ is (possibly) singular.
By the It\^o formula, $(f_t)_{t\geq 0}$ is a weak solution to the Fokker-Planck equation
\begin{equation}\label{fpe}
\partial_t f_t(x,v) + v \partial_xf_t(x,v)= \frac 12 \big(\partial_{vv}f_t(x,v) 
+ \beta \partial_v[ F(v) f_t(x,v)]\big).
\end{equation}

Corollary \ref{coco}-(a) tells us that, when $\beta>5$, in a weak sense, 
$\e^{-1/2} f_{\e^{-1}t}(\e^{-1/2}x,v)$ goes, as $\e\to 0$, to 
$g_t(x)\mu_\beta(v)$, where $g_t(x)$ is the centered Gaussian density with variance $\sigma_\beta^2t$. 
%This had already been proved, in any
%dimension, for the force $F(v)=v/(1+v^2)$, by Nasreddine and Puel \cite{np}.

\vip

If $\beta=5$, by Corollary \ref{coco}-(b), still in a weak sense, 
$\e^{-1/2}|\log\e|^{1/2} f_{\e^{-1}t}(\e^{-1/2}|\log\e|^{1/2}x,v)$ 
goes, as $\e\to 0$, to $g_t(x)\mu_\beta(v)$, where $g_t(x)$ is 
the centered Gaussian density with variance $\sigma_5^2t$. 
%This had already been checked, in any
%dimension, for $F(v)=v/(1+|v|^2)$, by Cattiaux, Nasreddine and Puel \cite{cnp}.

\vip

If $\beta\in (1,5)$, by Corollary \ref{coco}-(c), setting $\alpha=(\beta+1)/3$,
$\e^{-1/\alpha}f_{\e^{-1}t}(\e^{-1/\alpha}x,v)$ 
goes, as $\e\to 0$, to $g_t(x)\mu_\beta(v)$, where $g_t(x)$ is 
the symmetric stable law characterized by its Fourier transform
$\int_\rr g_t(x)e^{i\xi x}\dd x = \exp(- t |\sigma_\beta \xi |^\alpha)$.
%This had already been proved, in dimension $1$ and
%for the force $F(v)=v/(1+|v|^2)$, by Lebeau and Puel \cite{lp}.

\vip

If $\beta=1$, by Theorem \ref{mr}-(d), setting $\rho_t(x)=\int_\rr f_t(x,v)\dd v$, 
$|\e \log\e|^{-3/2} \rho_{\e^{-1}t}(|\e\log\e|^{-3/2}x)$ goes, in a weak sense, to $g_t(x)$, where $g_t(x)$ is
characterized by $\int_\rr g_t(x)e^{i\xi x}\dd x = \exp(- t |\sigma_1 \xi |^{2/3})$.

\vip

Finally, when $\beta \in (0,1)$,  Theorem \ref{mr}-(e) tells us that
$\e^{-2}f_{\e^{-1}t}(\e^{-3/2} x, \e^{-1/2} v)$ converges to $h_t(x,v)$, the density of the law
of $(U^{(1-\beta)}_t,\int_0^t U^{(1-\beta)}_s\dd s)$.
It should be that $(h_t)_{t\geq 0}$ is a symmetric solution, in some very weak sense, 
of \eqref{fpe}, with $h_0=\delta_{(0,0)}$ and $F(v)=v^{-1}\indiq_{\{v \neq 0\}}$.

\subsection{Motivation and references}

Performing a space-time rescaling in Boltzmann-like equations to get
some diffusion limit for the position process is an old and
classical subject, see for example Larsen and Keller 
\cite{lk}, Bensoussans, Lions and Papanicolaou \cite{blp}.
More recently, Bodineau, Gallagher and Saint-Raymond \cite{bgsr}
obtained some Brownian motion, starting from a system of hard spheres.

\vip

However, anomalous diffusion often arises in physics, and many works show how to modify
the collision kernel in some Boltzmann-like linear equations to get some fractional diffusion limit
(i.e. a symmetric stable jumping position process).
One can e.g. linearize the Boltzmann equation around a fat tail equilibrium or consider some
{\it ad hoc} cross section.
This was initiated by 
Mischler, Mouhot and Mellet \cite{mmm}, with close links to
the earlier work of Milton, Komorowski and Olla \cite{mko} on Markov chains. This was continued by 
Mellet \cite{m}, Ben Abdallah, Mellet and Puel \cite{bmp,bmp2} and others.

\vip

The Fokker-Planck equation is often used in physics to approximate Boltzmann-like equations, because it is
generally more simple to tackle.
When the repealing/friction force $F$ 
is strong enough, the velocities have an invariant measure with fast decay. 
When $F(v)=v$, the diffusion limit for the position process 
was then predicted by Langevin in \cite{l}. For generalizations and recent analysis
see e.g. Cattiaux, Chafa\"i and Guillin \cite{ccg}.

\vip

The only way to hope for some anomalous diffusion limit, for a Fokker-Planck toy model like \eqref{eds}, 
is to choose
the repealing force in such a way that the invariant measure of the velocity process has a fat tail. 
Then one realizes that we have to choose $F$ behaving like $F(v)\sim 1/v$ as $|v|\to \infty$,
and the most natural choice is $F(v)=v/(1+v^2)$. This case has been studied 
by Nasreddine and Puel \cite{np} (normal diffusion, $\beta>5$, in any dimension),  
Cattiaux, Nasreddine and Puel \cite{cnp}
(critical case $\beta=5$, in any dimension) 
and Lebeau and Puel \cite{lp} (anomalous diffusion, $\beta \in (1,5)$, in one dimension), the
case $\beta \in (0,1]$ being left open.

\vip

For this model, there is no (weighted) Poincar\'e inequality for the velocity process, so that 
this process converges quite slowly to equilibrium.
This seems to be an issue when $\beta\leq 5$. 
Some more complicated inequalities, see Cattiaux, Gozlan, Guillin and Roberto \cite{cggr}, are used in
\cite{cnp,lp}. 

\vip

The anomalous diffusion case $\beta \in (1,5)$ for \eqref{eds} is rather difficult to treat,
in comparison to above cited works 
\cite{mmm,m,bmp,bmp2} on Boltzmann-like equations. In particular, while the 
stable index $\alpha$ is more or less prescribed from the beginning in \cite{mmm,m,bmp,bmp2}, it is
a rather mysterious function of $\beta$ in the present case. The paper \cite{lp} relies on a
deep spectral analysis, making a wide use of special functions. From the probabilistic point of view,
the models studied in \cite{mmm,m,bmp,bmp2} correspond to jumping velocity processes with fat tail L\'evy measures,
so that stable L\'evy limits are quite natural.

\vip

In a somewhat different perspective, physicists discovered that atoms, when cold by a laser,
diffuse anomalously, like \emph{L\'evy walks}. See
Castin, Dalibard and Cohen-Tannoudji \cite{cdct}, Sagi, Brook, Almog and Davidson \cite{sbad} and 
Marksteiner, Ellinger and Zoller \cite{mez}. 
A theoretical study has been proposed by Barkai, Aghion and Kessler \cite{bak} 
(see also
and Hirschberg, Mukamel and Sch\"utz \cite{hms}). They precisely model the motion of atoms by
\eqref{eds} with the force $F(v)=v/(1+v^2)$ induced by the laser field.
They prove, with a quite high level of rigor, the results of Theorem \ref{mr}, excluding the critical cases
and stating point (e), that they call Obukhov-Richardson phase, differently.

\vip

Let us say a word of the proof of \cite{bak}. 
One easily gets convinced that the velocity process behaves like a (symmetrized) 
Bessel process with dimension
$\delta=1-\beta$ when far away from $0$, simply because $F(v)\simeq v^{-1}$. But, even when $\delta \leq 0$, 
this velocity process is not stuck when it reaches $0$. Consequently, the position process
can be approximated by a sum of i.i.d. (signed) areas of excursions of Bessel processes with 
dimension $\delta$, with a
sense to be precised when $\delta\leq 0$ (consider the area under the Bessel process until it reaches $0$, 
when starting from $v>0$, and let $v\to 0$ with in a clever way). Using some explicit computations relying on
modified Bessel functions, they show that this area has a fat tail distribution, with a density
decaying at infinity like $x^{-1-\alpha}$, where 
$\alpha =(\beta+1)/3$. Hence if $\beta>5$, then $\alpha>2$ and 
this area has a second order moment, whence the classical central
limit theorem applies, and normal diffusion occurs. If now $\alpha<2$, one has to use a {\it stable}
limit theorem, and anomalous diffusion arises.

\vip

Actually, when $\beta < 1$, the {\it length} of the Bessel excursion is no more integrable and the above proof 
breaks down. Barkai, Aghion and Kessler \cite{bak} introduce some Bessel bridges and 
handle some tedious explicit computations. But the situation
is actually much easier since, at least at the informal level, the Bessel process with dimension
$\delta=1-\beta>0$ is not stuck when it reaches $0$, so that one can simply approximate the
velocity process by a {\it true} (symmetrized) Bessel process with dimension $\delta$.

\subsection{Goal and strategy}
Our initial goal was to completely formalize the arguments of \cite{bak}, in order to provide
a probabilistic proof, including the critical cases, of the results of \cite{np,cnp,lp}.
We found another way, which is more qualitative and even more probabilistic,
making use of the connections (or similarities) between Bessel and stable processes, see Section \ref{rrr}. 
We provide a concise proof, that moreover allows us to deal with
general forces of the form \eqref{condi}.

\vip

The core of the paper (when $\beta\leq 5$, which is the most interesting case) 
consists in making precise the following {\it informal} arguments.
For $(W_t)_{t\geq 0}$ a Brownian motion and for $\tau_t$ the inverse of the time change $A_t=(\beta+1)^{-2}\int_0^t 
|W_s|^{-2\beta/(\beta+1)}\dd s$, the process $Y_t=W_{\tau_t}$ should classically solve, see e.g.  
Revuz-Yor \cite[Proposition 1.13 page 373]{ry}, 
$Y_t=(\beta+1)\int_0^t |Y_s|^{\beta/(\beta+1)}
\dd B_s$, for some other Brownian motion $(B_t)_{t\geq 0}$. Hence, still informally, $V_t=\sg(Y_t)|Y_t|^{1/(\beta+1)}$
should solve, by the It\^o formula, 
\begin{equation}\label{tb}
V_t=B_t -(\beta/2)\intot \sg(V_s)|V_s|^{-1}\dd s, 
\end{equation}
This is a rough version of \eqref{eds} with $F(v)=\sg(v)|v|^{-1}$ and we admit that,
after rescaling, this describes some large time behavior of the true solution to \eqref{eds}.

\vip

We recognize in \eqref{tb} a symmetric version of the SDE for a Bessel 
process of dimension $\delta=1-\beta$.

\vip

If $\beta\in (0,1)$, i.e. $\delta>0$, such a (symmetric) Bessel process is well-defined and non-trivial,
see also Definition \ref{bs} below. Thus 
Theorem \ref{mr}-(e) is not surprising.

\vip

If $\beta\in (1,5)$, i.e. $\delta\leq 0$, it is well-known that $V_t$ will remain stuck at $0$.
But it actually appears that $A_t$ is infinite and, in some sense to be precised, 
proportional to the local time $L^0_t$ of $(W_t)_{t\geq 0}$. Hence, up to correct rescaling, 
$X_t=\int_0^{t} V_s\dd s = \int_0^{t} \sg(W_{\tau_s})|W_{\tau_s}|^{1/(\beta+1)}\dd s
= \int_0^{\tau_t} \sg(W_{s})|W_{s}|^{1/(\beta+1)}\dd A_s=(\beta+1)^{-2}\int_0^{\tau_t} \sg(W_{s})|W_{s}|^{(1-2\beta)/(\beta+1)}
\dd s$. Since $(\tau_t)_{t\geq 0}$ is proportional to the inverse of the local time
of $(W_t)_{t\geq 0}$, we know from Biane-Yor \cite{by} that 
 $(X_t)_{t\geq 0}$ is an $\alpha$-stable L\'evy process, 
with $\alpha=(\beta+1)/3$. See Theorem \ref{ss} below for a precise statement and a few explanations.
All this is completely informal, in particular observe that we always have $W_{\tau_t}=0$, so that
the equality $\int_0^{t} \sg(W_{\tau_s})|W_{\tau_s}|^{1/(\beta+1)}\dd s
= \int_0^{\tau_t} \sg(W_{s})|W_{s}|^{1/(\beta+1)}\dd A_s$ is far from being fully justified.

\vip

The proof of Theorem \ref{ss} does not require deep computations involving specials functions,
unless one wants to know the value of the diffusion constant. This is why we say that 
our proof is {\it qualitative}.

\section{Stable and Bessel processes}\label{rrr}
In the whole paper,
we denote by $\sg$ the sign function with the convention that $\sg(0)=0$.

\subsection{Stable processes}
We will use the following theorem that we found in Biane-Yor \cite{by}.
Very similar results were already present in It\^o-McKean \cite[page 226]{imk}
and Jeulin-Yor \cite{jy} when $\alpha\in (0,1)$.

\begin{thm}\label{ss}
Fix $\alpha \in (0,2)$. Consider a Brownian motion $(W_t)_{t\geq 0}$, its local time $(L_t^0)_{t\geq 0}$ at $0$
and its right-continuous generalized inverse $\tau_t=\inf\{u\geq 0 : L^0_u > t\}$. For $\eta>0$, let
$K_t^\eta= \intot \sg(W_s)|W_s|^{1/\alpha - 2} \indiq_{\{|W_s|\geq \eta\}}\dd s$.
Then $(K^\eta_t)_{t\geq 0}$ a.s. converges to some process $(K_t)_{t\geq 0}$ as $\eta\to 0$,
and $(K_{\tau_t})_{t\geq 0}$ is a symmetric $\alpha$-stable process such that
$$
\E[\exp(i\xi K_{\tau_t})]=\exp(-\kappa_\alpha t |\xi|^\alpha),
\quad \hbox{where} \quad
\kappa_\alpha=\frac{2^\alpha \pi \alpha^{2\alpha}}{2\alpha[\Gamma(\alpha)]^2 \sin(\pi\alpha/2)}.
$$
\end{thm}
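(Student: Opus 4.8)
The plan is to combine three ingredients: the occupation times formula together with the space-regularity of Brownian local times for the a.s. convergence of $(K^\eta_t)$; the regeneration property at inverse local times (It\^o excursion theory) to see that $(K_{\tau_t})_{t\geq0}$ is a L\'evy process; and Brownian scaling to identify it as symmetric $\alpha$-stable.

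\emph{A.s. convergence.} Writing $(L^a_t)_{a\in\rr}$ for the jointly continuous family of local times of $W$, the occupation times formula gives
$$
K^\eta_t=\int_{|a|\geq\eta}\sg(a)|a|^{1/\alpha-2}L^a_t\,\dd a=\int_\eta^\infty a^{1/\alpha-2}\big(L^a_t-L^{-a}_t\big)\,\dd a .
$$
Integrability at infinity is clear since $a\mapsto L^a_t$ is compactly supported. Near $a=0$ I would use that $a\mapsto L^a_t$ is a.s. H\"older continuous of any order $<1/2$, so $|L^a_t-L^{-a}_t|\leq C_\omega\, a^{1/2-\e}$; since $\alpha<2$ forces $1/\alpha>1/2$, the exponent $1/\alpha-3/2-\e$ exceeds $-1$ for $\e$ small, and the limiting integral converges absolutely. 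This yields $K^\eta_t\to K_t:=\int_0^\infty a^{1/\alpha-2}(L^a_t-L^{-a}_t)\,\dd a$, a.s. and locally uniformly in $t$.

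\emph{L\'evy property.} Because $W_{\tau_t}=0$, the inverse local times are regeneration times: by the strong Markov property the fragments of $W$ on the successive excursion intervals $[\tau_{s-},\tau_s]$ are independent, and $K_{\tau_t}-K_{\tau_s}$ is a functional of the fragment over $[\tau_s,\tau_t]$ only, so $(K_{\tau_t})_{t\geq0}$ has independent stationary increments. In excursion-theoretic terms it is the sum, over the Poisson point process of excursions $e$ with intensity $\dd s\otimes n(\dd e)$ ($n$ the It\^o measure), of the areas $\phi(e)=\int_0^{\zeta(e)}\sg(e_u)|e_u|^{1/\alpha-2}\,\dd u$; each $\phi(e)$ is finite since an excursion vanishes like $\sqrt u$ at its endpoints and $(1/\alpha-2)/2>-1$. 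Thus $(K_{\tau_t})$ is a pure-jump L\'evy process with L\'evy measure $\nu=\phi_* n$, symmetric because $n$ and $\phi$ are odd under $e\mapsto-e$.

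\emph{Self-similarity and the constant.} For $c>0$ the process $W^{(c)}_s=c^{-1}W_{c^2s}$ is again a Brownian motion, with $L^{0,(c)}_t=c^{-1}L^0_{c^2t}$, hence $\tau^{(c)}_t=c^{-2}\tau_{ct}$, and a change of variables gives $K^{(c)}_t=c^{-1/\alpha}K_{c^2t}$, so that $K^{(c)}_{\tau^{(c)}_t}=c^{-1/\alpha}K_{\tau_{ct}}$. Since $W^{(c)}\stackrel{d}{=}W$, this yields $(K_{\tau_{ct}})_t\stackrel{d}{=}(c^{1/\alpha}K_{\tau_t})_t$; a L\'evy process with this exact self-similarity is strictly $\alpha$-stable, and symmetric by the invariance $W\mapsto-W$. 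Therefore $\E[\exp(i\xi K_{\tau_t})]=\exp(-\kappa_\alpha t|\xi|^\alpha)$ for some $\kappa_\alpha\geq0$. I expect two main obstacles: first, justifying rigorously that $K_{\tau_t}$ is the correct principal-value sum of the (non-absolutely-summable, when $\alpha>1$) excursion areas, the symmetric cancellation in the $\eta\to0$ limit playing the role of the stable compensation; and second, pinning down $\kappa_\alpha$, the only genuinely computational point, which amounts to the tail estimate $n(\{\phi(e)>x\})\sim c\,x^{-\alpha}$ under the It\^o measure via explicit Bessel-function computations for excursion functionals, matched against $\int(1-\cos y)|y|^{-1-\alpha}\,\dd y$; alternatively one quotes Biane--Yor \cite{by} for this value.
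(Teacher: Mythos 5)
Your proposal is correct and is essentially the paper's own treatment: the paper does not prove Theorem \ref{ss} from scratch but quotes it from Biane--Yor \cite{by}, and its accompanying justification is exactly your argument --- the strong Markov property at the stopping times $\tau_t$ (where $W_{\tau_t}=0$) gives the L\'evy property, oddness gives symmetry, Brownian scaling forces $\varphi(z)=\sg(z)|z|^{1/\alpha-2}$ and strict $\alpha$-stability, the case $\alpha\in[1,2)$ is flagged as delicate for precisely your compensation reason, and the exact constant $\kappa_\alpha$ is deferred to \cite{by}. Moreover, your occupation-times/H\"older argument for the a.s. convergence of $K^\eta$ is precisely how the paper later establishes that convergence (proof of Lemma \ref{tloc}-(b), with $\theta\in(0,1/2)$ playing the role of your $1/2-\e$), so the only point taken on faith in both your write-up and the paper is the value of $\kappa_\alpha$.
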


When $\alpha \in (0,1)$, we simply have $K_t= \intot \sg(W_s)|W_s|^{1/\alpha - 2}\dd s$, since this integral 
is a.s. absolutely convergent.
Theorem \ref{ss} is very natural and easy to verify, as far as we are
not interested in the exact value of $\kappa_\alpha$. Indeed, $\tau_t$ is a stopping-time,
with $W_{\tau_t}=0$, for each $t\geq 0$. Hence
the strong Markov property implies that 
the process $Z^\varphi_{t}=\int_0^{\tau_t}\varphi(W_s)\dd s$
is L\'evy, for any reasonable function $\varphi:\rr\mapsto\rr$.
It is furthermore of course symmetric if $\varphi$ is odd. If finally one wants $Z^\varphi_{t}$
to satisfy the scaling property of $\alpha$-stable processes, i.e.
$Z^\varphi_{t}\stackrel{d}=c^{-1/\alpha}Z^\varphi_{c t}$, 
there is no choice for $\varphi$:
it has to be $\varphi(z)=\sg(z)|z|^{1/\alpha - 2}$, recall that 
$((\tau_t)_{t\geq 0}, (W_t)_{t\geq 0})\stackrel{d}=(c^{-2}\tau_{c t},c^{-1}W_{c^2t})$.
All this perfectly holds true when $\alpha\in (0,1)$,
but there are some small difficulties when $\alpha\in [1,2)$ because the integral
$\intot \sg(W_s)|W_s|^{1/\alpha - 2}\dd s$ is not absolutely convergent. The 
computation of $\kappa_\alpha$ is rather tedious and involves special functions.

\subsection{Bessel processes}

Bessel processes are studied in details in Revuz-Yor \cite[Chapter XI]{ry}. 
These are nonnegative processes,
and we need a signed symmetric version.
Roughly, we would like to take a Bessel process and to change the sign of each excursion, independently, 
with probability 
$1/2$. Inspired by Donati-Roynette-Vallois-Yor \cite{drvy}, we will rather use the following 
(equivalent) definition.

\begin{defi}\label{bs}
Let $\delta \in (0,2)$. Consider a Brownian motion $(W_t)_{t\geq 0}$, introduce the time-change
$\bA_t=(2-\delta)^{-2}\intot |W_s|^{-2(1-\delta)/(2-\delta)} \dd s$ and its inverse $(\btau_t)_{t\geq 0}$.
We set $U^{(\delta)}_t=\sg(W_{\btau_t}) |W_{\btau_t}|^{1/(2-\delta)}$ and say that $(U^{(\delta)}_t)_{t\geq 0}$ is a 
symmetric Bessel process
with dimension $\delta$.
\end{defi}

Since $2(1-\delta)/(2-\delta)<1$, $\E[\bA_t]<\infty$ for all $t\geq0$.
The map $t\mapsto \bA_t$ is a.s. continuous, strictly increasing and $\bA_\infty=\infty$
by recurrence of $(W_t)_{t\geq 0}$,
so that $(\btau_t)_{t\geq 0}$ is well-defined and continuous.
Also, $U_t^{(1)}=W_t$: the Brownian motion
is the symmetric version of the Bessel process of dimension $1$. 

\vip

To justify the terminology, let us mention that 
$(|U^{(\delta)}_t|)_{t\geq 0}$ is
a Bessel process with dimension $\delta$. Indeed, \cite[Corollary 2.2]{drvy} tells us that, for
$(R_t)_{t\geq 0}$ a Bessel process with dimension $\delta$, $R_t=|W_{C_t}|^{1/(2-\delta)}$
for some Brownian motion $(W_t)_{t\geq 0}$ and for $C_t=(2-\delta)^2\intot R_s^{2(1-\delta)}\dd s$.
But $C_t=\btau_t$, whence $R_t=|U^{(\delta)}_t|$,
because
$\bA_{C_t}=(2-\delta)^{-2}\int_0^{C_t}|W_s|^{-2(1-\delta)/(2-\delta)} \dd s=
\int_0^t |W_{C_u}|^{-2(1-\delta)/(2-\delta)} R_u^{2(1-\delta)}\dd u= t$.

\section{Proofs}

Here is the strategy of the proof. We first verify quickly that
the velocity process is well-defined for all times, regular and recurrent, and we
explain why it suffices to prove Theorem \ref{mr} when $X_0=V_0=0$.

\vip

In a second subsection, we (classically) check Theorem \ref{mr} in the normal
diffusive case $\beta>5$. 

\vip In Subsection \ref{ss3}, we introduce some functions $\Psi_\e:\rr\mapsto\rr$ and
$\sigma_\e:\rr \mapsto(0,\infty)$ such that, for $(W_t)_{t\geq 0}$ a Brownian motion and for
$(\tau^\e_t)_{t\geq 0}$ the inverse of $(A^\e_t)_{t\geq 0}$ defined by 
$A^\e_t=\intot [\sigma_\e(W_s)]^{-2}\dd s$, the processes $(V_{t/\e})_{t\geq 0}$
and $(\Psi_\e(W_{\tau^\e_t}))_{t\geq 0}$ have the same law.

\vip

In Subsection \ref{ss5}, we prove our main result when $\beta\in (0,1)$.
We first write $(\e^{1/2} V_{t/\e},\e^{3/2}X_{t/\e})_{t\geq 0}\stackrel{d}=(\e^{1/2} \Psi_\e(W_{\tau^\e_t}),
\int_0^t \e^{1/2} \Psi_\e(W_{\tau^\e_s}) \dd s)_{t\geq 0}$.
And we show that $\e^{1/2} \Psi_\e(z)$ resembles $\sg(z)|z|^{1/(1+\beta)}$
and that $\sigma_\e(z)$ resembles $(\beta+1)|z|^{\beta/(\beta+1)}$. 
Consequently, recalling Definition \ref{bs} (with $\delta=1-\beta$),
$A^\e_t\simeq \bA_t$, whence $\tau^\e_t\simeq \btau_t$, and $\e^{1/2} \Psi_\e(W_{\tau^\e_t})\simeq \sg(W_{\btau_t})
|W_{\btau_t}|^{1/(\beta+1)}=U^{{(1-\beta)}}_t$ as desired.

\vip

We study the case where $\beta\in [1,5]$ in Subsection \ref{ss6}. 
Up to logarithmic corrections when $\beta=1$ or $5$, we write 
$(\e^{1/\alpha}X_{t/\e})_{t\geq 0} \stackrel{d}=(\e^{1/\alpha-1}\!\intot \Psi_\e(W_{\tau^\e_s})\dd s)_{t\geq 0}
=(\e^{1/\alpha-1}\!\int_0^{\tau^\e_t} \Psi_\e(W_{s})[\sigma_\e(W_s)]^{-2}\dd s)_{t\geq 0}$. We then
recall Theorem \ref{ss} with $\alpha=(\beta+1)/3$, and we check that
$A^\e_t\simeq L^0_t$, whence $\tau^\e_t\simeq \tau_t$, and that 
$\e^{1/\alpha-1}\Psi_\e(z)[\sigma_\e(z)]^{-2}\simeq \sg(z)|z|^{(1-2\beta)/(1+\beta)}
=\sg(z)|z|^{1/\alpha-2}$. All in all, we deduce that $(\e^{1/\alpha}X_{t/\e})_{t\geq 0} \stackrel{d}\simeq 
(\int_0^{\tau_t} \sg(W_s)|W_s|^{1/\alpha-2}\dd s)_{t\geq 0}$,
which is a symmetric $\alpha$-stable process. 

\vip

In this last case $\beta \in [1,5]$, the situation is actually more complicated, there 
are some constants appearing everywhere.
Let us also mention that 
the case $\beta=5$ requires a special study, using both the standard method (as when $\beta>5$)
and some local time arguments.

\vip 

Finally, the last subsection is devoted to the proof of Corollary \ref{coco}.

\subsection{Preliminaries}

Let us first prove the following.

\begin{lem}\label{ci}
(a) The solution $(V_t)_{t\geq 0}$ to \eqref{eds} is global, regular and recurrent.

\vip

(b) There is $C>0$ such that, if $V_0=0$, $\E[V_t^2+|V_t|^{\beta+1}]\leq C (1+t)$ for all $t\geq 0$.

\vip

(c) If Theorem \ref{mr}-(a)-(b)-(c)-(d) holds when $X_0=V_0=0$ a.s., then it holds for any initial condition.

\vip

(d) To prove Theorem \ref{mr}-(e), it suffices to prove that $(\e^{1/2}V_{t/\e})_{t\geq 0}\stackrel{d}\longrightarrow 
(U^{(1-\beta)}_t)_{t\geq 0}$ when $V_0=0$.
\end{lem}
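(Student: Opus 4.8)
The plan for (a) and (b) is to read everything off the one-dimensional diffusion structure of $(V_t)$, whose generator is $L=\frac12\partial_{vv}-\frac\beta2 F\partial_v$. First I would compute the scale derivative $s'(v)=\exp(\beta\int_0^v F(u)\dd u)=(\Theta(0)/\Theta(v))^\beta$, which by \eqref{condi} grows like $|v|^\beta$; hence $s(\pm\infty)=\pm\infty$, giving recurrence, while strict positivity and continuity of $s'$ together with the constant (nondegenerate) diffusion coefficient give regularity. Non-explosion and the first half of (b) come at once from It\^o's formula applied to $v^2$: since $\dd\langle V\rangle_t=\dd t$,
$$\dd V_t^2=2V_t\dd B_t+(1-\beta V_tF(V_t))\dd t,$$
and $vF(v)$ is continuous with $vF(v)\to1$ at $\pm\infty$, hence bounded by some $M$. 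Localizing at $\zeta_n=\inf\{t:|V_t|\ge n\}$ gives $\E[V_{t\wedge\zeta_n}^2]\le V_0^2+(1+\beta M)t$ uniformly in $n$, which forces $\zeta_n\to\infty$ (globality) and, for $V_0=0$, yields $\E[V_t^2]\le C(1+t)$.

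For the $(\beta+1)$-th moment I would apply It\^o to $|v|^{\beta+1}$, regularized near the origin by $(\eta+v^2)^{(\beta+1)/2}$ when $\beta<1$ (the singularity being integrable since $\beta-1>-1$). The drift is $L|v|^{\beta+1}=\frac{\beta(\beta+1)}2|v|^{\beta-1}(1-vF(v))$, and the crucial point is that $1-vF(v)\to0$. When $\beta\le3$ this drift is bounded (bounded near $0$, decaying at infinity), so $\E[|V_t|^{\beta+1}]\le Ct$ immediately. When $\beta>3$ the drift grows like $|v|^{\beta-3}$, a moment of order $\beta-3<\beta-1$ that is \emph{integrable} for $\mu_\beta$; I would dominate it by the second moment (when $\beta\le5$, since then $\beta-3\le2$) or, more generally, control it through the recurrence of the process. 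I expect \emph{this} to be the main obstacle: for a power $|v|^\gamma$ the leading term of $L|v|^\gamma$ is $\frac\gamma2(\gamma-1-\beta)|v|^{\gamma-2}$, which degenerates precisely at $\gamma=\beta+1$, so the linear-in-$t$ bound rests on controlling a strictly lower moment, finite under $\mu_\beta$ but, with no Poincar\'e inequality available (slow relaxation), requiring a careful occupation-time argument rather than a one-line estimate.

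For (c) I would erase the initial condition via the strong Markov property. Let $T=\inf\{t\ge0:V_t=0\}$, a.s. finite by the recurrence of (a). Then $(V_{T+s})_{s\ge0}$ is a velocity process started from $0$, say $(\tilde V_s)_{s\ge0}$, so that for $\e$ small enough that $t/\e>T$,
$$r_\e X_{t/\e}=r_\e X_0+r_\e\int_0^T V_s\dd s+r_\e\int_0^{t/\e-T}\tilde V_s\dd s,$$
with $r_\e$ the relevant normalization ($\e^{1/2}$, $\e^{1/2}|\log\e|^{-1/2}$, $\e^{1/\alpha}$ or $|\e\log\e|^{3/2}$). The first two terms tend to $0$ since $r_\e\to0$ and the integrals are a.s. finite, and the last differs from $r_\e\int_0^{t/\e}\tilde V_s\dd s$ by an integral over a window of fixed length $T$, which I would show negligible using (b). The constant time-shift $\e T\to0$ is harmless for the finite-dimensional limits, by stochastic continuity of the limiting Brownian or stable process, so $(V_t)$ inherits the convergence established for $(\tilde V_s)$.

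For (d) I would start from the exact identity $\e^{3/2}X_{t/\e}=\e^{3/2}X_0+\int_0^t\e^{1/2}V_{s/\e}\dd s$, which exhibits the rescaled position as the image of the rescaled velocity path under the continuous map $w\mapsto(w_t,\int_0^t w_s\dd s)$ on $C([0,\infty))$ with the local-uniform topology. The continuous mapping theorem then reduces (e) to the convergence $(\e^{1/2}V_{t/\e})_{t\ge0}\stackrel{d}{\longrightarrow}(U^{(1-\beta)}_t)_{t\ge0}$ of continuous processes, the term $\e^{3/2}X_0$ being negligible. The reduction to $V_0=0$ runs as in (c) but now at path level: the pre-$T$ piece lives on the shrinking interval $[0,\e T]$ and its contribution $\e^{1/2}\sup_{u\le T}|V_u|$ vanishes a.s., while the vanishing time-shift is absorbed using the continuity of the limit $U^{(1-\beta)}$. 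The added subtlety relative to (c) is upgrading these controls from finite-dimensional to uniform-on-compacts convergence.
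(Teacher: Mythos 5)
Parts (a), (c) and (d) of your proposal are essentially fine: your scale function is, up to a multiplicative constant, the paper's function $h(v)=(\beta+1)\int_0^v\Theta^{-\beta}(u)\dd u$ (the paper instead reads globality, regularity and recurrence off the time-change representation of Lemma \ref{ex}, but the classical scale-function dichotomy gives the same, and divergence of $s$ at both ends already yields non-explosion, so you do not need the moment bound for that); your arguments for (c) and (d) --- strong Markov at the hitting time of $0$, a window estimate via (b), a path-level estimate absorbed by continuity of the limit --- are exactly the paper's.

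The genuine gap is in (b), and it is twofold. First, you assume $vF(v)\to 1$, hence that $vF(v)$ is bounded. This does not follow from \eqref{condi}: the hypothesis constrains $\Theta$ but says nothing about $\Theta'$. Since $\int_0^v F(u)\dd u=\log\Theta(0)-\log\Theta(v)=\log|v|+O(1)$, the force may oscillate: for instance $F(v)=1/v+\cos(v^2)$ for $|v|\geq 1$ (extended oddly and smoothly) corresponds, after normalizing $\Theta$, to an admissible $\Theta$ because $\int_1^v\cos(u^2)\dd u$ converges, and yet $vF(v)=1+v\cos(v^2)$ is unbounded. So both your second-moment/non-explosion step and your drift identity $L|v|^{\beta+1}=\frac{\beta(\beta+1)}{2}|v|^{\beta-1}(1-vF(v))$ rest on a property that fails for the class of forces the paper treats. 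Second, even granting $1-vF(v)=O(v^{-2})$ (true for the typical example), your argument does not close: for $3<\beta\leq 5$, dominating the drift by $C(1+V_t^2)$ and integrating the second-moment bound gives only $\E[|V_t|^{\beta+1}]\leq C(1+t)^2$, a quadratic bound rather than the claimed linear one; and for $\beta>5$ you offer no argument at all, only the acknowledgment that ``a careful occupation-time argument'' would be needed. The paper's trick sidesteps all of this: instead of applying It\^o's formula to $|v|^{\beta+1}$, apply it to the explicit solution $\ell(v)=2\int_0^v\Theta^{-\beta}(x)\int_0^x\Theta^{\beta}(u)\dd u\,\dd x$ of the Poisson equation $\ell''-\beta F\ell'=2$, which gives $\E[\ell(V_t)]=t$ exactly --- the pointwise behavior of $F$ never enters, only $\Theta(v)\sim 1/|v|$ does. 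One then checks that $\ell(v)\sim c|v|^{\beta+1}$ if $\beta>1$, $\ell(v)\sim cv^2\log|v|$ if $\beta=1$, $\ell(v)\sim cv^2$ if $\beta\in(0,1)$, whence $v^2+|v|^{\beta+1}\leq C(1+\ell(v))$ and the linear bound holds for every $\beta>0$ in one stroke. Since your (c) and (d) consume (b), this is the step you must repair.
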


\begin{proof}
We first verify (a).
As we will see in Lemma \ref{ex} with $\e=a_\e=1$, the solution $(V_t)_{t\geq 0}$ to \eqref{eds}
with $V_0=0$ has the same law as
$(h^{-1}(W_{\tau_t}))_{t\geq 0}$, for some Brownian motion $(W_t)_{t\geq 0}$, some (random) continuous bijective 
time change 
$\tau_t:[0,\infty)\mapsto [0,\infty)$ and some continuous bijective function $h:\rr\mapsto \rr$. 
Hence $(V_t)_{t\geq 0}$
is non-exploding and thus global, and it is regular and recurrent (when starting from any initial condition).

\vip

We next prove (b). The even function $\ell(v)=2 \int_0^v \Theta^{-\beta}(x)\int_0^x \Theta^\beta(u)\dd u \dd x$ 
solves the Poisson equation $\ell''(v)-\beta F(v)\ell'(v)=2$, whence $\E[\ell(V_t)]=t$,
by the It\^o formula and since $\ell(V_0)=\ell(0)=0$.
Using \eqref{condi}, we see that there is a constant $c>0$ such that, as $|v|\to \infty$,
$\ell(v) \sim c |v|^{\beta+1}$ if $\beta>1$, $\ell(v) \sim c |v|^2 \log |v|$ if $\beta=1$, and 
$\ell(v)\sim c v^2$ if $\beta \in (0,1)$. Thus in any case, we can find a constant
$C$ such that $v^2+|v|^{\beta+1} \leq C(\ell(v)+1)$ for all $v\in \rr$, whence 
$\E[V_t^2+|V_t|^{\beta+1}]\leq C (1+\E[\ell(V_t)])=C(1+t)$.

\vip

We now check (c). Assume that for 
$\beta\geq 1$, Theorem \ref{mr} holds when starting from $(0,0)$ and consider the solution 
$(V_t,X_t)_{t\geq 0}$ to \eqref{eds}
starting from some $(V_0,X_0)$. We introduce $\tau=\inf\{t\geq 0 : V_t=0\}$, which is a.s. finite by recurrence.
Then $(\hV_t,\hX_t)=(V_{\tau+t},X_{\tau+t}-X_\tau)$ solves \eqref{eds}, starts from $(0,0)$, and is independent 
of $\tau$ by the strong Markov property. We thus know that $(v^{(\beta)}_\e \hX_{t/\e})_{t\geq 0}\stackrel{f.d.}\to 
(X^{(\beta)}_{t})_{t\geq 0}$, where $v^{(\beta)}_\e\to 0$ and $(X^{(\beta)}_{t})_{t\geq 0}$ are the rate and limit process
appearing in Theorem \ref{mr}.
We now prove that for each $t\geq 0$,
$v^{(\beta)}_\e |X_{t/\e}-\hX_{t/\e}|\to 0$ in probability, and this will complete the proof.
We introduce $D^1=|X_0|+\int_0^{2\tau}|V_s|\dd s$ and
$D^{2,\e}_t=\indiq_{\{t/\e\geq \tau\}}\int_{t/\e-\tau}^{t/\e}|\hV_s|\dd s$ and observe that
$|X_{t/\e}-\hX_{t/\e}| \leq D^1+D^{2,\e}_t$. Indeed, 

$\bullet$ if $t/\e \leq \tau$,
$|X_{t/\e}-\hX_{t/\e}|\leq |X_{t/\e}|+|X_{\tau+t/\e}-X_\tau|\leq |X_0|+\int_0^{t/\e} |V_s|\dd s+\int_{\tau}^{\tau+t/\e}
|V_s|\dd s \leq D^1$,

$\bullet$ if  $t/\e > \tau$, $|X_{t/\e}-\hX_{t/\e}|=|X_\tau + \hat X_{t/\e-\tau}-\hat X_{t/\e}|\leq|X_0|+
\int_0^{\tau} |V_s|\dd s + \int_{t/\e-\tau}^{t/\e} |\hat V_s|\dd s\leq D^1+D^{2,\e}_t$.

\noindent But $v^{(\beta)}_\e D^{1}$ a.s. tends to $0$, and using (b),
$$
\E[v^{(\beta)}_\e D^{2,\e}_t | \cF_\tau]
\leq \indiq_{\{t/\e\geq \tau\}} Cv^{(\beta)}_\e \int_{t/\e-\tau}^{t/\e} (1+s)^{1/(\beta+1)} \dd s 
\leq C \tau v^{(\beta)}_\e(1+t/\e)^{1/(\beta+1)},
$$
which a.s. tends to $0$ for all values of $\beta\geq 1$. Hence $v^{(\beta)}_\e D^{2,\e}_t$ tends to $0$ in probability.

\vip

We finally prove (d). First,  $(\e^{1/2}V_{t/\e})_{t\geq 0}\stackrel{d}\longrightarrow 
(U^{(1-\beta)}_t)_{t\geq 0}$ implies that $(\e^{1/2}V_{t/\e},\e^{3/2} X_{t/\e})_{t\geq 0}\stackrel{d}\longrightarrow 
(U^{(1-\beta)}_t,\int_0^t U^{(1-\beta)}_s\dd s)_{t\geq 0}$, simply because $\e^{3/2} X_{t/\e}=\e^{3/2}X_0 + 
\int_0^t (\e^{1/2}V_{s/\e})\dd s$.
We assume that this convergence holds true when $V_0=0$, consider any other solution $(V_t)_{\geq 0}$, 
introduce $\tau>0$
and $\hV_t=V_{\tau+t}$ as previously. Our goal is to check that 
$\Delta^\e_T=\e^{1/2}\sup_{[0,T]} |V_{t/\e}-\hV_{t/\e}| \to 0$ in probability.
We write $\Delta^\e_T\leq \Delta^{1,\e}+\Delta^{2,\e}_T$,
where $\Delta^{1,\e}=2\e^{1/2}\sup_{[0,2\tau]}|V_{s}|$ and $\Delta^{2,\e}_T=\sup_{[0,T]} \e^{1/2}
|\hV_{(t+\e \tau)/\e}-\hV_{t/\e}|$. Indeed,

$\bullet$ if $t\in [0,T]$ and 
$t/\e \leq \tau$, $\e^{1/2}|V_{t/\e}-\hV_{t/\e}| \leq  \e^{1/2}|V_{t/\e}|+\e^{1/2}|V_{\tau+t/\e}| \leq 
\Delta^{1,\e}$,

$\bullet$ if $t\in [0,T]$ and   $t/\e > \tau$, $\e^{1/2}|V_{t/\e}-\hV_{t/\e}|
=\e^{1/2}|\hV_{t/\e-\tau}- \hV_{t/\e}|\leq \Delta^{2,\e}_T$.

\noindent First, $\Delta^{1,\e}$ a.s. tends to $0$. Next, it is not hard to check that
$\Delta_T^{2,\e}$ goes in probability to $0$, using that $(\e^{1/2}\hV_{t/\e})_{t\geq 0}$
goes in law, in $C([0,\infty),\rr)$, to the continuous process $(U^{(1-\beta)}_t)_{t\geq 0}$.
\end{proof}

\subsection{The normal diffusion regime}
The following proof is standard, see e.g. Jacod-Shiryaev \cite[Chapter VIII, Section 3f]{js}.

\begin{proof}[Proof of Theorem \ref{mr}-(a).]
We assume that $\beta>5$ and, in view of Lemma \ref{ci}-(c), that $X_0=V_0=0$. Thanks to Lemma \ref{ci}-(a)
and since $\mu_\beta$ is a probability measure (because $\beta>1$), we classically deduce, see e.g. Kallenberg
\cite[Lemma 23.17 page 466 and Thm 23.14 page 464]{k}, that
\vip
(i) $V_t$ goes in law to $\mu_\beta$ as $t\to \infty$,
\vip
(ii) for all $\varphi \in L^1(\rr,\mu_\beta)$, $\lim_{t\to \infty} t^{-1}\int_0^t \varphi(V_s)\dd s
=\int_\rr \varphi \dd \mu_\beta$ a.s.
\vip

The function $g(v)=2 \int_0^v \Theta^{-\beta}(x)\int_x^\infty u\Theta^\beta(u)\dd u \dd x$ is odd
(since $\Theta$ is even and $\int_\rr u\Theta^\beta(u)\dd u \dd x=0$) and
solves the Poisson equation $g''(v)-\beta F(v)g'(v)=-2 v$, whence,
by the It\^o formula,
$$
g(V_t)=\intot g'(V_s)\dd B_s - \intot V_s\dd s,\quad i.e.\quad
X_t=\intot g'(V_s)\dd B_s - g(V_t).
$$
Consequently, we have $\e^{1/2}X_{t/\e}=M^\e_t- \e^{1/2}g(V_{t/\e})$,
where $M^\e_t=\e^{1/2}\int_0^{t/\e}g'(V_s)\dd B_s$.

\vip

For each $t\geq 0$, $\e^{1/2}g(V_{t/\e})$ tends to $0$ in probability: this follows from point (i) above. 
Here is why we deal with finite-dimensional distributions: it is not clear
that $\sup_{t\in [0,1]} |\e^{1/2}g(V_{t/\e})|$ tends to $0$.

\vip

We now show that $(M^\e_t)_{t\geq 0}$ goes in law (in the usual sense of continuous processes) 
to $(\sigma_\beta B_t)_{t\geq 0}$, and this will complete the proof.
It suffices, see e.g. Jacod-Shiryaev \cite[Theorem VIII-3.11 page 473]{js}, 
to verify that for each $t\geq 0$,
$\lim_{\e\to 0} \langle M^\e \rangle_t = \sigma_\beta^2 t$ in probability.
But $\langle M^\e \rangle_t= \e\int_0^{t/\e}[g'(V_s)]^2\dd s$,
which a.s. tends to $\sigma_\beta^2 t$ by point (ii). Indeed, using a symmetry argument,
$$
\int_\rr [g'(v)]^2 \mu_\beta(\dd v)  = 8 \int_0^\infty \!\! 
\Big[\Theta^{-\beta}(v)\int_v^\infty\!\! u \Theta^\beta(u)\dd u\Big]^2
\mu_\beta(\dd v) = 8c_\beta \int_0^\infty\!\! \Theta^{-\beta}(v)
\Big[\int_v^\infty\!\! u \Theta^\beta(u)\dd u \Big]^2\dd v=\sigma_\beta^2,
$$
recall Subsection \ref{mrc}. This value 
is finite, since $\Theta^{-\beta}(v)\Big[\int_v^\infty u \Theta^\beta(u)\dd u \Big]^2\sim 
(\beta-2)^{-2}v^{4-\beta}$
as $v \to \infty$ by \eqref{condi} and since $\beta>5$.
\end{proof}

\begin{rk}\label{aaa}
When $\beta=5$, our goal is to prove that 
$(\e^{1/2}|\log\e|^{-1/2}X_{t/\e})_{t\geq 0} \stackrel{f.d.}{\longrightarrow} 
(\sigma_5 B_t)_{t\geq 0}$. We can use exactly the same proof, provided we can show that for each $t\geq 0$,
in probability, as $\e\to 0$,
$$
\frac\e{|\log \e|} \int_0^{t/\e} [g'(V_s)]^2\dd s \longrightarrow \sigma_5^2 t.
$$
\end{rk}

\subsection{Scale function and speed measure}\label{ss3}

We introduce a few notation, closely linked with the scale function and speed measure of the process 
$(V_t)_{t\geq 0}$. This will allow us to rewrite $(X_t)_{t\geq 0}$
in a way that it resembles the objects appearing in Theorem \ref{ss} and Definition \ref{bs}.
All the functions below are defined on $\rr$. Recall our conditions on $\Theta$, see \eqref{condi}.

\vip

First, $h(v)=(\beta+1)\int_0^v [\Theta(u)]^{-\beta}\dd u$ is odd, increasing, bijective,
solves $h''=\beta F h'$, and we have 
$h(v)\stackrel{|v|\to\infty}\sim\sg(v)|v|^{\beta+1}$ and $h^{-1}(z)\stackrel{|z|\to\infty}\sim\sg(z)|z|^{1/(\beta+1)}$.

\vip

Next, $\sigma(z)=h'(h^{-1}(z))$ is even, bounded below by some $c>0$ and
$\sigma(z)\stackrel{|z|\to\infty}\sim (\beta+1)|z|^{\beta/(\beta+1)}$.

\vip

The function $\phi(z)=h^{-1}(z)/\sigma^2(z)$ is odd and $\phi(z)\stackrel{|z|\to\infty}\sim 
(\beta+1)^{-2}\sg(z)|z|^{(1-2\beta)/(\beta+1)}$.

\vip

When $\beta=5$, $\psi(z)=[g'(h^{-1}(z))]^2/\sigma^2(z)$ is even,
bounded and $\psi(z)\stackrel{|z|\to\infty}\sim 1/(81|z|)$. The even function 
$g'(v)=2\Theta^{-5}(v)\int_v^\infty u\Theta^5(u)\dd u\stackrel{|v|\to\infty}\sim 2|v|^2/3$ 
was introduced in the proof of Theorem \ref{mr}-(a).

\begin{lem}\label{ex}
Fix $\beta>0$, $\e>0$ and $a_\e>0$. 
Consider a Brownian motion $(W_t)_{t\geq 0}$. Define $A^{\e}_t=\e a_\e^{-2} \intot [\sigma(W_s/a_\e)]^{-2} \dd s$
and its inverse $(\tau^{\e}_t)_{t\geq 0}$,
which is a continuous increasing bijection
from $[0,\infty)$ into itself. Set 
$$
V^{\e}_t=h^{-1}(W_{\tau^{\e}_t}/a_\e) \quad \hbox{and} \quad 
X^\e_t=H^\e_{\tau^\e_t} \quad \hbox{where} \quad  H^\e_t=a_\e^{-2}\intot \phi(W_s/a_\e) \dd s.
$$
For $(V_t,X_t)_{t\geq 0}$ the unique solution of \eqref{eds} starting from $(0,0)$, we have
$(V_{t/\e},X_{t/\e})_{t\geq 0} \stackrel{d}=(V^{\e}_t,X^\e_t)_{t\geq 0}$.
\end{lem}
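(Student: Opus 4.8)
The plan is to start from the Brownian motion $W$ of the statement, build $V^\e$ and $X^\e$, and show that $(V^\e_t)_{t\geq 0}$ is, in law, a time-accelerated solution of \eqref{eds}; uniqueness for that SDE then forces its law to coincide with that of $(V_{t/\e})_{t\geq 0}$, after which the position follows for free. The only structural facts I need about the functions of Subsection \ref{ss3} are that $h$ is a \emph{scale function} for the velocity, i.e. $h''=\beta F h'$ (equivalently $\cL h =0$ for $\cL=\frac12\partial_{vv}-\frac\beta2 F\partial_v$), that $\sigma=h'\circ h^{-1}$ is bounded below by some $c>0$, and that $\phi=h^{-1}/\sigma^2$.

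First I would analyse the time change. Set $M_t=W_{\tau^\e_t}$ and $Z_t=M_t/a_\e$, so that $V^\e_t=h^{-1}(Z_t)$. Since $\tau^\e$ inverts $A^\e$, we have $(\tau^\e)'_t=1/(A^\e)'(\tau^\e_t)=\e^{-1}a_\e^2\sigma(Z_t)^2$, whence $M$ is a continuous local martingale with $\langle M\rangle_t=\tau^\e_t=\int_0^t \e^{-1}a_\e^2\sigma(Z_s)^2\dd s$. By the Dambis--Dubins--Schwarz theorem there is a Brownian motion $\hat B$ with $M_t=a_\e\e^{-1/2}\int_0^t\sigma(Z_s)\dd \hat B_s$, i.e. $\dd Z_t=\e^{-1/2}\sigma(Z_t)\dd \hat B_t$. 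Applying It\^o's formula to $V^\e_t=h^{-1}(Z_t)$, using $(h^{-1})'=1/\sigma$ together with the scale-function identity $\frac12(h^{-1})''\sigma^2=-\frac\beta2 F\circ h^{-1}$ (a direct consequence of $h''=\beta F h'$), the martingale part becomes $\e^{-1/2}\dd \hat B_t$ and the drift $-\frac\beta2\e^{-1}F(V^\e_t)\dd t$, so that
$$
\dd V^\e_t=\e^{-1/2}\dd \hat B_t-\frac\beta2 \e^{-1}F(V^\e_t)\dd t,\qquad V^\e_0=h^{-1}(0)=0.
$$

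Next I would match this with the target. Changing the time variable $s=r/\e$ in \eqref{eds} shows that $\bar V_t:=V_{t/\e}$ solves exactly the same equation, driven by the Brownian motion $\tilde B_t:=\e^{1/2}B_{t/\e}$ and started from $0$. Since $F$ is $C^1$, this SDE enjoys pathwise uniqueness, hence uniqueness in law, so $(V^\e_t)_{t\geq 0}\stackrel{d}=(V_{t/\e})_{t\geq 0}$ as continuous processes. For the position, the substitution $s=\tau^\e_u$ (so $\dd s=(\tau^\e)'_u\dd u=\e^{-1}a_\e^2\sigma(Z_u)^2\dd u$) together with $\phi\sigma^2=h^{-1}$ gives $X^\e_t=H^\e_{\tau^\e_t}=a_\e^{-2}\int_0^{\tau^\e_t}\phi(W_s/a_\e)\dd s=\e^{-1}\int_0^t V^\e_u\dd u$, the exact analogue of $X_{t/\e}=\e^{-1}\int_0^t V_{u/\e}\dd u$. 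As the position is a fixed continuous functional of the velocity path, equality in law of the velocities upgrades to equality in law of the pairs.

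I expect the only real difficulty to be the careful bookkeeping of the time changes rather than any genuinely hard estimate. In particular one must check that the representation is global: $A^\e_\infty=\infty$ by recurrence of $W$ (so $\tau^\e$ is an honest bijection of $[0,\infty)$, as asserted in the statement) and $\langle M\rangle_\infty=\tau^\e_\infty=\infty$, which is what lets Dambis--Dubins--Schwarz produce $\hat B$ on $[0,\infty)$ without fuss; note that $\hat B$ is a priori defined on a possibly enlarged space, which is harmless since we only claim an identity in law. An equivalent route, closer to the section title, runs the argument forwards: represent $Y_t=h(V_t)$ as a time-changed Brownian motion $W_{\rho_t}$, with $\rho$ the inverse of $\int_0^\cdot\sigma(W_r)^{-2}\dd r$, then introduce the acceleration $t\mapsto t/\e$ and the Brownian scaling $\tilde W_s=a_\e W_{s/a_\e^2}$ to recover the stated formulas; this requires the same inversions and substitutions.
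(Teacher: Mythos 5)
Your proof is correct and follows essentially the same route as the paper: time-change the Brownian motion so that $Y^\e_t=W_{\tau^\e_t}$ solves $\dd Y^\e_t=\e^{-1/2}a_\e\,\sigma(Y^\e_t/a_\e)\,\dd B^\e_t$, apply It\^o's formula with the scale-function identities to see that $V^\e$ solves the accelerated version of \eqref{eds}, conclude by well-posedness of that SDE, and recover the position via the substitution $s=\tau^\e_u$ together with $\phi\sigma^2=h^{-1}$. The only cosmetic difference is that where the paper directly cites Revuz--Yor (Proposition 1.13, page 373) for the SDE satisfied by the time-changed Brownian motion, you re-derive it by hand via the martingale representation step (which you label Dambis--Dubins--Schwarz; note that since $\sigma\geq c>0$ no enlargement is actually needed, as one may set $\hat B_t=\int_0^t \big(\e^{-1/2}a_\e\sigma(Z_s)\big)^{-1}\dd M_s$ and invoke L\'evy's characterization).
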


The result holds for any value of $a_\e>0$, but in each situation, we will choose it
judiciously, in such a way that $(A^\e_t)_{t\geq 0}$ a.s. converges, as $\e\to 0$, to the desired
limit time-change.

\begin{proof}
Since $\sigma$ is bounded below, $t\mapsto A^{\e}_t$ is a.s. continuous and strictly 
increasing. By recurrence of the Brownian motion, we also have $A^{\e}_\infty=\infty$ a.s. Hence 
$\tau^{\e}_t$ is well-defined, continuous, bijective from $[0,\infty)\mapsto[0,\infty)$ and
$Y^\e_t=W_{\tau^\e_t}$ classically solves, see e.g. Revuz-Yor \cite[Proposition 1.13 page 373]{ry}, 
$Y^\e_t= \e^{-1/2}a_\e\intot 
\sigma(Y^\e_s/a_\e) \dd B^\e_s$, for some Brownian motion $(B^\e_t)_{t\geq 0}$.
We then use the It\^o formula to write $V^\e_t=h^{-1}(Y^\e_t/a_\e)$ as
$$
V^\e_t = a_\e^{-1} \intot (h^{-1})'(Y^\e_s/a_\e)\e^{-1/2}a_\e\sigma(Y^\e_s/a_\e) \dd B^\e_s
+ \frac 1 2 a_\e^{-2}\intot (h^{-1})''(Y^\e_s/a_\e)\e^{-1}a_\e^2\sigma^2(Y^\e_s/a_\e) \dd s.
$$
But $(h^{-1})'(y)\sigma(y)=1$ and $(h^{-1})''(y)\sigma^2(y)=-\sigma'(y)=
- h''(h^{-1}(y))/h'(h^{-1}(y))=
-\beta F(h^{-1}(y))$, whence finally
$$
V^\e_t= \e^{-1/2}B^\e_t -  \frac\beta 2\e^{-1}\intot F(h^{-1}(Y^\e_s/a_\e))\dd s=\e^{-1/2}B^\e_t - \frac\beta 2
\e^{-1}\intot F(V^\e_s)\dd s.
$$
Starting from \eqref{eds}, we find
$$
V_{t/\e}=B_{t/\e} - \frac\beta 2\int_0^{t/\e}F(V_{s})\dd s=\e^{-1/2} (\e^{1/2}B_{t/\e}) -  
\frac\beta 2\e^{-1}\int_0^{t}F(V_{s/\e})\dd s.
$$
Hence $(V^\e_t)_{t\geq 0}$ and  $(V_{t/\e})_{t\geq 0}$ are two solutions of the same well-posed SDE,
driven by different Brownian motions, namely $(B^\e_t)_{t\geq 0}$ and $(\e^{1/2}B_{t/\e})_{t\geq 0}$.
They thus have the same law. 

\vip

Since $X_{t/\e}=\int_0^{t/\e}V_s\dd s=\e^{-1}\int_0^t V_{s/\e}\dd s$, we conclude that
$(V_{t/\e},X_{t/\e})_{t\geq 0}\stackrel{d}=(V^\e_t,\e^{-1}\int_0^t V^\e_{s}\dd s)_{t\geq 0}$. But
using the substitution $u=\tau^\e_s$, i.e. $s=A^\e_u$, whence 
$\dd s =\e a_\e^{-2}[\sigma(W_u/a_\e)]^{-2}\dd u$, we find
$$
\e^{-1}\int_0^t V^\e_{s}\dd s=\e^{-1}\intot h^{-1}(W_{\tau^{\e}_s}/a_\e)\dd s
=a_\e^{-2}\int_0^{\tau^\e_t} \frac{h^{-1}(W_u/a_\e)}{[\sigma(W_u/a_\e)]^{2}}\dd u
=a_\e^{-2}\int_0^{\tau^\e_t} \phi(W_u/a_\e)\dd u,
$$
which equals $H^\e_{\tau^\e_t}$ as desired.
\end{proof}

\subsection{Inverting time-changes}\label{ss4}

We recall the following classical and elementary results.

\begin{lem}\label{tc}
Consider, for each $n\geq 1$, a continuous increasing bijective function $(a^n_t)_{t\geq 0}$ from $[0,\infty)$
into itself, as well as its inverse $(r^n_t)_{t\geq 0}$. 

\vip

(a) Assume that $(a^n_t)_{t\geq 0}$ converges pointwise to some function $(a_t)_{t\geq 0}$ such that 
$\lim_{\infty} a_t=\infty$, denote by $r_t=\inf\{u\geq 0 : a_u>t\}$
its right-continuous generalized inverse and set $J=\{s\in [0,\infty) : r_{t-}<r_t\}$.
For all $t \in [0,\infty)\setminus J$, we have $\lim_{t\to \infty} r^n_t=r_t$.

\vip

(b) If  $(a^n_t)_{t\geq 0}$ converges (locally) uniformly to some strictly increasing function 
$(a_t)_{t\geq 0}$ such that $\lim_{\infty} a_t=\infty$, 
then $(r^n_t)_{t\geq 0}$ converges (locally) uniformly to $(r_t)_{t\geq 0}$,
the (classical) inverse of $(a_t)_{t\geq 0}$.
\end{lem}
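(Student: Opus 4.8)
The plan is to treat both parts through the elementary one-sided comparisons between a continuous strictly increasing bijection and its inverse. For each fixed $n$, since $a^n$ is a continuous strictly increasing bijection of $[0,\infty)$, its inverse $r^n$ is the classical one and satisfies the equivalences $r^n_t>u \iff a^n_u<t$ and $r^n_t<u \iff a^n_u>t$. On the limit side, I would record two facts about the generalized inverse $r_t=\inf\{u\ge0:a_u>t\}$ of the nondecreasing limit $a$ (nondecreasing as a pointwise limit of increasing functions, with $a_0=0$ and $a_\infty=\infty$): first, for every $u>r_t$ one has $a_u>t$, directly from the definition of the infimum; second, for every $u<r_t$ one has $a_u\le t$, and in fact $a_u<t$ as soon as $t\notin J$.

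For part (a) I would then squeeze $r^n_t$ from both sides. For the upper bound, fix any $u>r_t$; since $a_u>t$ and $a^n_u\to a_u$, we have $a^n_u>t$ for $n$ large, hence $r^n_t<u$ for $n$ large, so $\limsup_n r^n_t\le u$, and letting $u\downarrow r_t$ gives $\limsup_n r^n_t\le r_t$. This half needs no assumption on $t$. For the lower bound, fix any $u<r_t$; here I would use $t\notin J$ to upgrade $a_u\le t$ to the strict inequality $a_u<t$. Indeed, if $a_u=t$ for some $u<r_t$, then monotonicity forces $a\equiv t$ on the nondegenerate interval $[u,r_t)$, and since $a_u=t>s$ for every $s<t$ one gets $r_s\le u$, whence $r_{t-}\le u<r_t$, i.e.\ $t\in J$, a contradiction. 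With $a_u<t$ in hand, $a^n_u<t$ for $n$ large, so $r^n_t>u$ for $n$ large and $\liminf_n r^n_t\ge u$; letting $u\uparrow r_t$ gives $\liminf_n r^n_t\ge r_t$. Combining the two bounds yields $r^n_t\to r_t$ for every $t\notin J$.

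For part (b), the locally uniform convergence $a^n\to a$ forces $a$ to be continuous, and it is strictly increasing with $a_0=0$ and $a_\infty=\infty$, so its generalized inverse coincides with the classical continuous inverse $r$ and $J=\emptyset$; part (a) therefore already gives pointwise convergence $r^n_t\to r_t$ for every $t$. To upgrade this to local uniformity I would invoke the classical fact (P\'olya's/Dini's theorem) that a sequence of nondecreasing functions converging pointwise to a continuous limit converges uniformly on compact sets; since each $r^n$ is nondecreasing and $r$ is continuous, this applies on every $[0,T]$. The only genuinely delicate point in the whole argument is the lower-bound step of part (a): one must use the exclusion of the jump set $J$ precisely to turn the weak inequality $a_u\le t$ (for $u<r_t$) into a strict one, which is exactly what pointwise convergence of $a^n$ can exploit; everywhere else the inequalities are already strict and the passage to the limit is automatic.
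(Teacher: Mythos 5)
Your proof is correct, but there is nothing in the paper to compare it against: the authors state Lemma \ref{tc} as a ``classical and elementary'' result and recall it \emph{without proof}, so your write-up supplies an argument the paper deliberately omits. What you give is the standard one, and every step checks out. For part (a), the squeeze via the equivalences $r^n_t<u \iff a^n_u>t$ and $r^n_t>u \iff a^n_u<t$ is exactly right, and you correctly isolate the only delicate point: for $u<r_t$ one only gets $a_u\le t$ in general, and the hypothesis $t\notin J$ is what upgrades this to $a_u<t$ (your contradiction argument --- $a_u=t$ with $u<r_t$ forces $a\equiv t$ on $[u,r_t)$, hence $r_s\le u$ for all $s<t$ and thus $r_{t-}\le u<r_t$, i.e.\ $t\in J$ --- is complete; note also that the case $r_t=0$ makes the lower bound vacuous, so nothing is lost there). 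For part (b), observing that local uniform convergence forces continuity of the strictly increasing limit $a$, hence $J=\emptyset$ and agreement of the generalized and classical inverses, and then upgrading the pointwise convergence from (a) to local uniform convergence by the P\'olya-type theorem for monotone functions with continuous limit, is precisely the intended route (this is also how the lemma is proved in standard references on inverse maps and time changes). In short: the proposal is a correct and complete proof of a statement the paper leaves to the reader.
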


\subsection{The integrated Bessel regime}\label{ss5}

We can now give the

\begin{proof}[Proof of Theorem \ref{mr}-(e)] Let $\beta \in (0,1)$ be fixed.
We consider a Brownian motion $(W_t)_{t\geq 0}$ and, as in Definition \ref{bs} with $\delta=1-\beta$,
we introduce the continuous strictly increasing bijective time-change 
$\bA_t=(\beta+1)^{-2}\intot |W_s|^{-2\beta/(\beta+1)}\dd s$, its inverse
$(\btau_t)_{t\geq 0}$ and the process $U^{(1-\beta)}_t=\sg(W_{\btau_t})|W_{\btau_t}|^{1/(\beta+1)}$.

\vip

We now apply Lemma \ref{ex} with the choice $a_\e=\e^{(\beta+1)/2}$: 
with the same Brownian motion as above, we consider, for each $\e>0$,
the time-change $A^\e_t=\e^{-\beta} \intot [\sigma(W_s/\e^{(\beta+1)/2})]^{-2}\dd s$, its inverse $\tau^\e_t$,
and $V^\e_t=h^{-1}(W_{\tau^{\e}_t}/\e^{(\beta+1)/2})$.
Recalling Lemma \ref{ci}-(d) and that
$(\e^{1/2}V_{t/\e})_{t\geq 0}\stackrel{d}=(\e^{1/2}V^\e_t)_{t\geq 0}$,
it suffices to prove that a.s., for all $T\geq 0$,
$\lim_{\e\to 0}  \sup_{[0,T]}|\e^{1/2}V^\e_t-U^{(1-\beta)}_t|=0$.

\vip

Since $\sigma(z)\geq c>0$ and $\sigma(z)\stackrel{|z|\to\infty}\sim (\beta+1)|z|^{\beta/(\beta+1)}$,
whence $\sigma^{-2}(z) \leq C |z|^ {-2\beta/(\beta+1)}$,
one has 
$$
\lim_{\e\to 0}\sup_{[0,T]} |A^\e_t-\bA_t|\leq \lim_{\e\to 0}\int_0^T \Big|\e^{-\beta}[\sigma(W_s/\e^{(\beta+1)/2})]^{-2}-
[(\beta+1)|W_s|^{\beta/(\beta+1)}]^{-2}\Big|\dd s=0 \quad \hbox{a.s.}
$$
by dominated convergence. Indeed, we have
$\sup_{\e>0}\e^{-\beta}[\sigma(W_s/\e^{(\beta+1)/2})]^{-2} \leq C |W_s|^{-2\beta/(\beta+1)}$,
and $\int_0^T |W_s|^{-2\beta/(\beta+1)}\dd s <\infty$ a.s.
because $2\beta/(\beta+1)<1$. 

\vip

By Lemma \ref{tc}-(b), we deduce that
for all $T\geq 0$, $\lim_{\e\to 0} \sup_{[0,T]}|\tau^\e_t - \btau_t|=0$ a.s. whence, by continuity of 
$(W_t)_{t\geq 0}$, 
\begin{equation}\label{uppp}
\lim_{\e\to 0} \sup_{[0,T]}|W_{\tau^\e_t} - W_{\btau_t}|=0 \quad \hbox{a.s. for all $T>0$.}
\end{equation}

We next claim that for all $M>0$,
$$
\kappa_\e(M)=\sup_{|z|\leq M} |\e^{1/2}h^{-1}(z/\e^{(\beta+1)/2}) - \sg(z)|z|^{1/(\beta+1)}| \to 0.
$$
Indeed, $h^{-1}$ being $C^1$, with $h^{-1}(0)=0$ and $h^{-1}(z) \stackrel{|z|\to\infty}\sim \sg(z)|z|^{1/(\beta+1)}$
the function $\gamma$ defined by 
$\gamma(z)=h^{-1}(z)/[ \sg(z)|z|^{1/(\beta+1)}]-1$ (and $\gamma(0)=-1$) 
is continuous, and 
$\lim_{|z|\to \infty} \gamma(z)=0$. Hence, 
$$
\kappa_\e(M) =  \sup_{|z|\leq M}|z|^{1/(\beta+1)}|\gamma(z/\e^{(\beta+1)/2})|
\leq \e^{1/4}||\gamma||_\infty + M^{1/(\beta+1)} \sup_{|z|\geq \e^{(\beta+1)/4}}|\gamma(z/\e^{(\beta+1)/2})|,
$$
which equals $\e^{1/4}||\gamma||_\infty + M^{1/(\beta+1)}\sup_{|z|\geq\e^{-(\beta+1)/4}}|\gamma(z)| \to 0$.

\vip

All in all, denoting by $M_T=\sup_{[0,T]}\sup_{\e\in(0,1)} |W_{\tau^\e_t}|$, which is a.s. finite by \eqref{uppp},
\begin{align*}
\sup_{[0,T]}|\e^{1/2}V^\e_t-U^{(1-\beta)}_t|=&\sup_{[0,T]}|\e^{1/2}h(W_{\tau^\e_t}/\e^{(\beta+1)/2}) - 
\sg(W_{\btau_t})|W_{\btau_t}|^{1/(\beta+1)}| \\
\leq & \kappa_\e(M_T) 
+ \sup_{[0,T]}\Big|\sg(W_{\tau^\e_t})|W_{\tau^\e_t}|^{1/(\beta+1)} - \sg(W_{\btau_t})|W_{\btau_t}|^{1/(\beta+1)}\Big|\to 0
\end{align*}
a.s., by \eqref{uppp} again. The proof is complete.
\end{proof}

\subsection{The L\'evy regime and the critical cases}\label{ss6}

We start with the following crucial lemma.

\begin{lem}\label{tloc}
Fix $\beta \in [1,5]$ and a Brownian motion $(W_t)_{t\geq 0}$, denote by $(L^0_t)_{t\geq 0}$ its local time at $0$
and by $(K_t)_{t\geq 0}$ the process defined in Theorem \ref{ss} with $\alpha=(\beta+1)/3$.
For each $\e>0$, consider the processes $(A^{\e}_t)_{t\geq 0}$
and $(H^\e_t)_{t\geq 0}$ built in Lemma \ref{ex} with the choice 
$a_\e=\e/[(\beta+1)c_\beta]$ if $\beta\in(1,5]$ and $a_\e=\e |\log \e|/2$ if $\beta=1$,
and with the same Brownian motion $(W_t)_{t\geq 0}$ as above.

\vip

(a) We always have $\lim_{\e\to 0}\sup_{[0,T]} |A^\e_t - L^0_t|=0$ a.s. for all $T>0$.

\vip

(b) If $\beta \in (1,5)$, $\lim_{\e\to 0}\sup_{[0,T]} |\e^{1/\alpha}H^\e_t - (\beta+1)^{1/\alpha-2}
c_\beta^{1/\alpha}K_t|=0$ a.s. for all $T>0$.

\vip

(c) If $\beta =1$, $\lim_{\e\to 0}\sup_{[0,T]} ||\e\log\e |^{3/2}H^\e_t - K_t/\sqrt 2|=0$ a.s. for all $T>0$.

\vip

(d) If $\beta=5$, $\lim_{\e\to 0} \sup_{[0,T]} |T^\e_t - \sigma_5^2 L^0_t|=0$ a.s. for all $T>0$,
with $T^\e_t= \frac \e{a_\e^2|\log\e |}\intot \psi(W_s/a_\e)\dd s$.
\end{lem}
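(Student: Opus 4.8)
The plan is to handle all four parts through one device, the occupation times formula $\int_0^t f(W_s)\dd s=\int_\rr f(x)L^x_t\dd x$, where $(L^x_t)_{x\in\rr,t\geq 0}$ denotes the jointly continuous family of Brownian local times, together with the a.s. spatial H\"older regularity of local time: for every $T>0$ and every $\gamma<1/2$ there is an a.s. finite $C_T$ with $\sup_{t\leq T}|L^x_t-L^y_t|\leq C_T|x-y|^\gamma$. The even cases (a) and (d) are the soft ones; the odd cases (b) and (c) carry the genuine difficulty. Throughout I write $M_T=\sup_{[0,T]}|W_s|$, which is a.s. finite and furnishes the effective support $\{|z|\leq M_T/a_\e\}$ of $z\mapsto L^{a_\e z}_t$.

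For (a), I would apply the occupation formula to $A^\e_t=\e a_\e^{-2}\int_0^t[\sigma(W_s/a_\e)]^{-2}\dd s$ and substitute $x=a_\e z$ to get $A^\e_t=\e a_\e^{-1}\int_\rr[\sigma(z)]^{-2}L^{a_\e z}_t\dd z$. When $\beta\in(1,5]$ the weight $[\sigma(z)]^{-2}\sim(\beta+1)^{-2}|z|^{-2\beta/(\beta+1)}$ is integrable (as $2\beta/(\beta+1)>1$), so by dominated convergence and continuity of $z\mapsto L^{a_\e z}_t$ at $0$ the integral tends to $L^0_t\int_\rr[\sigma(z)]^{-2}\dd z$; the change of variables $z=h(v)$ gives $\int_\rr[\sigma(z)]^{-2}\dd z=(\beta+1)^{-1}\int_\rr\Theta^\beta(v)\dd v=(\beta+1)^{-1}c_\beta^{-1}$, and since $\e a_\e^{-1}=(\beta+1)c_\beta$ the product equals $L^0_t$; uniformity on $[0,T]$ follows from a Dini argument, the approximants and limit being continuous and nondecreasing in $t$. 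When $\beta=1$ the weight is only $\sim\tfrac14|z|^{-1}$ and hence non-integrable, but the local time cuts the $z$-integral off at $|z|\simeq M_T/a_\e$; the leading contribution is $L^0_t\cdot 2\int_1^{M_T/a_\e}\tfrac14 z^{-1}\dd z\sim\tfrac12|\log\e|\,L^0_t$, the subleading piece being $O(1)$ thanks to $|L^{a_\e z}_t-L^0_t|\leq C_T(a_\e z)^\gamma$, and multiplying by $\e a_\e^{-1}=2/|\log\e|$ again yields $L^0_t$. Part (d) is structurally identical to this logarithmic subcase: $\psi$ is even and bounded with $\psi(z)\sim1/(81|z|)$, the same log-extraction applies, and the bookkeeping of constants with $a_\e=\e/(6c_5)$ produces $\tfrac{12c_5}{81}L^0_t=\tfrac{4c_5}{27}L^0_t=\sigma_5^2 L^0_t$.

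For the odd cases (b) and (c) I would compare $\e^{1/\alpha}H^\e_t$ (resp.\ $|\e\log\e|^{3/2}H^\e_t$) with $CK_t$ and split at a threshold $\eta>0$, writing $\e^{1/\alpha}H^\e_t-CK_t=[\e^{1/\alpha}a_\e^{-2}\int_0^t\phi(W_s/a_\e)\indiq_{\{|W_s|\geq\eta\}}\dd s-CK^\eta_t]+R^\e_\eta+C(K^\eta_t-K_t)$, with $R^\e_\eta$ the remaining near-origin integral. On $\{|W_s|\geq\eta\}$ the integrand is bounded away from the singularity, and the uniform asymptotics $\phi(z)\sim(\beta+1)^{-2}\sg(z)|z|^{1/\alpha-2}$ (recall $(1-2\beta)/(\beta+1)=1/\alpha-2$) combined with $(\e/a_\e)^{1/\alpha}=[(\beta+1)c_\beta]^{1/\alpha}$ give $\e^{1/\alpha}a_\e^{-2}\phi(W_s/a_\e)\to C\sg(W_s)|W_s|^{1/\alpha-2}$ uniformly for $|W_s|\in[\eta,M_T]$, so the first bracket tends to $0$ for each fixed $\eta$; the constant $C=(\beta+1)^{1/\alpha-2}c_\beta^{1/\alpha}$ (resp.\ $1/\sqrt2$, since $|\e\log\e|/a_\e=2$ forces $2^{3/2}(\beta+1)^{-2}=2^{-1/2}$) is exactly what the ratio $\e/a_\e$ prescribes. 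Finally $C(K^\eta_t-K_t)\to0$ as $\eta\to0$ by Theorem \ref{ss}, so a standard double-limit argument closes the estimate once $R^\e_\eta$ is controlled.

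The main obstacle is precisely this remainder $R^\e_\eta=\e^{1/\alpha}a_\e^{-2}\int_0^t\phi(W_s/a_\e)\indiq_{\{|W_s|<\eta\}}\dd s$, because for $\alpha\geq1$ (that is $\beta\in[2,5)$) the integral $\int_0^t\sg(W_s)|W_s|^{1/\alpha-2}\dd s$ is not absolutely convergent and a naive modulus bound diverges; one must use cancellation. The key step is that, by the occupation formula and the oddness of $\phi$, $R^\e_\eta=\e^{1/\alpha}a_\e^{-2}\int_0^\eta\phi(x/a_\e)\,[L^x_t-L^{-x}_t]\,\dd x$, and the antisymmetric local-time increment is controlled by $|L^x_t-L^{-x}_t|\leq C_T x^\gamma$; substituting $x=a_\e z$ turns the bound into $C_T(\e/a_\e)^{1/\alpha}\int_0^{\eta/a_\e}|\phi(z)|z^\gamma\dd z$. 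Choosing $\gamma\in(1-1/\alpha,1/2)$ --- a nonempty range precisely because $\alpha<2$ --- makes the exponent $1/\alpha-2+\gamma$ of $|\phi(z)|z^\gamma$ exceed $-1$, so the integral grows like $(\eta/a_\e)^{1/\alpha-1+\gamma}$ with $1/\alpha-1+\gamma>0$, whence $\limsup_{\e\to0}\sup_{[0,T]}|R^\e_\eta|\leq C\eta^{1/\alpha-1+\gamma}\to0$ as $\eta\to0$. The same H\"older-in-space estimate, run without the rescaling, also reproves that $K^\eta\to K$ locally uniformly, making the scheme self-contained. In case (c), where $\alpha=2/3<1$, this step is softer (absolute convergence already suffices since $1-1/\alpha<0$ permits any $\gamma$), the only genuine change being the extra logarithmic factor buried in $a_\e=|\e\log\e|/2$, which converts $\e^{1/\alpha}$ into $|\e\log\e|^{3/2}$ and delivers the coefficient $1/\sqrt2$.
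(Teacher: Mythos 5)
Your proof is correct, and it rests on the same two pillars as the paper's: the occupation times formula and the a.s.\ regularity of Brownian local time in the space variable. What differs is the way the estimates are organized, and interestingly the two organizations are almost mirror images of each other. In the odd cases, the paper never introduces your threshold $\eta$: using oddness of $\phi$ it represents both $\e^{1/\alpha}H^\e_t$ and $K_t$ as spatial integrals against the \emph{compensated} local time $L^x_t-L^0_t\indiq_{\{|x|\leq 1\}}$, and concludes in a single dominated-convergence pass, the domination being $M_{\theta,T}(|x|\land 1)^{\theta}$ times an integrable deterministic profile, where $M_{\theta,T}=\sup_{[0,T]\times\rr}(|x|\land 1)^{-\theta}|L^x_t-L^0_t\indiq_{\{|x|\leq 1\}}|<\infty$ a.s.\ for $\theta\in(0,1/2)$; the integrability requirement $(1-2\beta)/(\beta+1)+\theta>-1$ is exactly your condition $\gamma\in(1-1/\alpha,1/2)$, i.e.\ $\alpha<2$. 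Your double limit ($\e\to 0$, then $\eta\to 0$), with the folded increment $|L^x_t-L^{-x}_t|\leq C_T x^\gamma$ controlling the near-origin remainder uniformly in $\e$, is the same cancellation packaged differently; it costs some extra bookkeeping, but as you note it also re-proves the locally uniform convergence $K^\eta\to K$, which the paper's compensated representation likewise yields for free. Conversely, in the logarithmic cases ((a) with $\beta=1$, and (d)) it is the paper that performs a double limit: it splits at a \emph{fixed} spatial scale $\delta$, bounds the outer contribution by $T\e/(c a_\e\delta)$, extracts the factor $r_{\e,\delta}\to 1$ from the logarithmic asymptotics, and lets $\delta\to 0$ only at the very end, so that nothing beyond joint continuity of local time is needed there; your moving cutoff $|z|\leq M_T/a_\e$ combined with the H\"older bound does it in one limit, at the price of invoking the stronger (though standard) regularity. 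Both schemes are complete and your constants match the paper's throughout ($(\beta+1)^{1/\alpha-2}c_\beta^{1/\alpha}$, $1/\sqrt 2$, and $2\gamma/81=4c_5/27=\sigma_5^2$); the paper's choices minimize the regularity used in the even cases and avoid iterated limits in the odd ones, while yours treat all four cases with one uniform device.
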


We recall that $c_\beta=1/[\int_\rr [\Theta(v)]^\beta \dd v]$ (when $\beta>1$), that 
$\sigma_5^2=4c_5/27$ that the functions $h$, $\sigma$, $\phi$ and $\psi$
were introduced at the beginning of Subsection \ref{ss3}.

\begin{proof}
We start with (a) when $\beta>1$. We set $\gamma=(\beta+1)c_\beta$ and recall that $a_\e=\e/\gamma$,
whence $A^{\e}_t=\gamma^{2}\e^{-1}\intot [\sigma(\gamma W_s/\e)]^{-2} \dd s$.
Using the occupation times formula, see Revuz-Yor \cite[Corollary 1.6 page 224]{ry}, we may write
$$
A^{\e}_t=\int_\rr \frac{\gamma^2 L^x_t \dd x}{\e \sigma^2(\gamma x/\e)}=
\int_\rr \frac{\gamma L^{\e y/\gamma }_t \dd y}{\sigma^2(y)},
$$
where $(L^x_t)_{t\geq 0}$ is the local time of $(W_t)_{t\geq 0}$ at $x$.
Observe now that 
$$
\int_\rr \frac{\gamma\dd y}{\sigma^{2}(y)}= \int_\rr \frac{\gamma\dd y}{[h'(h^{-1}(y))]^{2}}
=\int_\rr \frac{\gamma\dd v}{h'(v)}=\int_\rr \frac{\gamma\Theta^\beta(v)\dd v}{(\beta+1)}= 1.
$$
Consequently, 
$$
\sup_{[0,T]} |A^\e_t - L^0_t| \leq 
\gamma \int_\rr\frac{\sup_{[0,T]}|L^{\e y/\gamma }_t-L^0_t| \dd y}{\sigma^2(y)},
$$
which a.s. tends to $0$ as $\e\to 0$ 
by dominated convergence, since $\sup_{[0,T]}|L^{\e y/\gamma}_t-L^0_t|$ a.s. tends to $0$
for each fixed $y$ by \cite[Corollary 1.8 page 226]{ry} and since $\sup_{[0,T]\times \rr} L^{x}_t$
is a.s. finite.

\vip

We now turn to point (a) when $\beta=1$, so that $a_\e=\e |\log \e|/2$. Also, $\sigma(z)$ is bounded below
and $\sigma(z)\stackrel{|z|\to\infty}\sim 2|z|^{1/2}$, from which
\begin{equation}\label{eqlog}
\int_{-x}^x \frac{\dd z} {\sigma^2(z) } \stackrel{x\to \infty}\sim \frac{\log x}2.
\end{equation}
We now fix $\delta>0$ and write $A^\e_t=I^{\e,\delta}_t+J^{\e,\delta}_t$, where
$$
I^{\e,\delta}_t=\intot \frac{\e \dd s}{ a_\e^2 \sigma^2(W_s/a_\e)}\indiq_{\{|W_s|\leq \delta\}} \quad \hbox{and}\quad 
J^{\e,\delta}_t=\intot \frac{\e \dd s}{ a_\e^2 \sigma^2(W_s/a_\e)}\indiq_{\{|W_s|> \delta\}}.
$$
There is $c>0$ such that $\sigma^2(z) \geq c(1+|z|)$, from which one verifies, using only that 
$|W_s|> \delta$ implies 
$\sigma^2(W_s/a_\e)\geq c(1+\delta/a_\e)\geq c \delta/a_\e$, that
$\sup_{[0,T]}|J^{\e,\delta}_t|\leq T \e/(c a_\e \delta)$, which tends to $0$ as $\e\to 0$.
We next use the occupation times formula to write
$$
I^{\e,\delta}_t= \int_{-\delta}^\delta \frac{\e L^x_t \dd x}{ a_\e^2 \sigma^2(x/a_\e)}=
\Big(\int_{-\delta}^\delta \frac{\e \dd x}{ a_\e^2 \sigma^2(x/a_\e)}\Big)L_t^0+
\int_{-\delta}^\delta \frac{\e (L^x_t-L^0_t) \dd x}{ a_\e^2 \sigma^2(x/a_\e)}=
r_{\e,\delta} L_t^0+ R^{\e,\delta}_t,
$$
the last identity standing for a definition.
But a substitution and \eqref{eqlog} allow us to write  
$$
r_{\e,\delta}=\int_{-\delta/{a_\e}}^{\delta/{a_\e}} \frac{\e \dd y}{ a_\e \sigma^2(y)} \stackrel{\e\to 0}\sim
\frac{\e \log (\delta/a_\e)}{2 a_\e} \longrightarrow 1
$$
as $\e\to 0$ since $a_\e=\e |\log \e|/2$. Recalling that $A^\e_t=r_{\e,\delta} L_t^0
+ R^{\e,\delta}_t+ J^{\e,\delta}_t$, we have proved that a.s., 
$$
\hbox{for all $\delta>0$,}\quad
\limsup_{\e\to 0} \sup_{[0,T]} |A^\e_t - L^0_t|\leq \limsup_{\e\to 0} \sup_{[0,T]} |R^{\e,\delta}_t|.
$$
But $\sup_{[0,T]}|R^{\e,\delta}_t|
\leq r_{\e,\delta} \times \sup_{[0,T]\times[-\delta,\delta]}|L^x_t-L^0_t|$, which implies that
$\limsup_{\e\to 0} \sup_{[0,T]} |A^\e_t - L^0_t|\leq \sup_{[0,T]\times[-\delta,\delta]}|L^x_t-L^0_t|$ a.s.
Letting $\delta\to 0$, using \cite[Corollary 1.8 page 226]{ry}, completes the proof.

\vip

Point (d), where $\beta=5$, is very similar to point (a) when $\beta=1$ and we only sketch the proof.
We set $\gamma=6c_5$ and recall that $a_\e=\e/\gamma$.
Since $\psi$ is bounded on $\rr$ and satisfies $\psi(z)\stackrel{|z|\to\infty}\sim
|81 z|^{-1}$, 
\begin{equation}\label{eqlog2}
\int_{-x}^x \psi(z)\dd z \stackrel{x\to \infty}\sim \frac{2\log x}{81}.
\end{equation}
Proceeding as previously, we can show rigorously that, 
for any $\delta>0$, uniformly in $t\in[0,T]$,
$$
T^\e_t=  \intot \frac{\gamma^2\psi(\gamma W_s/\e)\dd s}{ \e |\log \e|}
\simeq  \intot \frac{\gamma^2\psi(\gamma W_s/\e)\dd s}{ \e |\log \e|}\indiq_{\{|W_s|\leq \delta\}}
= \int_{-\delta}^\delta \frac{\gamma^2 \psi(\gamma x/\e) L^x_t \dd x}{\e |\log \e|},
$$
whence
$$
T^\e_t \simeq \Big(\frac \gamma {|\log \e|}\int_{-\delta \gamma/\e}^{\delta \gamma /\e} \psi(x)\dd x\Big)
\Big(L^0_t \pm \sup_{[0,T]\times[-\delta,\delta]}|L^x_t-L^0_t| \Big)\simeq \frac{2\gamma}{81}\Big( L^0_t
\pm \sup_{[0,T]\times[-\delta,\delta]}|L^x_t-L^0_t|\Big)
$$
by \eqref{eqlog2}. We conclude by letting $\delta$ tend to $0$, since $2\gamma/81=4c_5/27=\sigma_5^2$.

\vip

We now check (b), where $\beta \in (1,5)$ and $a_\e=\e/\gamma$ with $\gamma=(\beta+1)c_\beta$.
Recall that $\alpha=(\beta+1)/3$, whence $1/\alpha-2=(1-2\beta)/(\beta+1)$.
First, recalling Theorem \ref{ss} and the occupation times formula, 
$$
K^\eta_t=\int_\rr \sg(x)|x|^{(1-2\beta)/(\beta+1)}\indiq_{\{|x|\geq \eta\}}
L^x_t \dd x=\int_\rr \sg(x)|x|^{(1-2\beta)/(\beta+1)}\indiq_{\{|x|\geq \eta\}}
(L^x_t-L^0_t\indiq_{\{|x|\leq 1\}}) \dd x
$$
by symmetry. But we know from 
\cite[Corollary 1.8 page 226]{ry} and the fact that $\sup_{[0,T]\times\rr} L^x_t$ is a.s. 
finite that for all $\theta \in (0,1/2)$, all $T>0$,
$$
M_{\theta,T}=\sup_{[0,T]\times \rr} (|x|\land 1)^{-\theta}|L^x_t-L^0_t\indiq_{\{|x|\leq 1\}}|<\infty \quad \hbox{a.s.}
$$
Since $(1-2\beta)/(\beta+1)>-3/2$ (because $\beta<5$), we deduce that
$(K^\eta_t)_{t\geq 0}$ a.s. converges uniformly on $[0,T]$, as $\eta\to 0$, to
$$
K_t=\int_\rr \sg(x)|x|^{(1-2\beta)/(\beta+1)}(L^x_t-L^0_t\indiq_{\{|x|\leq 1\}}) \dd x.
$$
Similarly, by oddness of $\phi$ (and since $\e^{1/\alpha}a_\e^{-2}=\gamma^2\e^{1/\alpha-2}=\gamma^2
\e^{(1-2\beta)/(\beta+1)}$),
$$
\e^{1/\alpha}H^\e_t= 
\gamma^2\e^{(1-2\beta)/(\beta+1)}\intot \phi(\gamma W_s/\e)\dd s= 
\int_\rr \gamma^2 \e^{(1-2\beta)/(\beta+1)}\phi(\gamma x/\e)
(L^x_t-L^0_t\indiq_{\{|x|\leq 1\}}) \dd x.
$$
Hence
\begin{align*}
&\sup_{[0,T]}|\e^{1/\alpha}H^\e_t - (\beta+1)^{1/\alpha-2}c_\beta^{1/\alpha}K_t|\\
\leq& \int_\rr \Big|\gamma^2\e^{(1-2\beta)/(\beta+1)}\phi(\gamma x/\e)- 
(\beta+1)^{1/\alpha-2}c_\beta^{1/\alpha}\sg(x)|x|^{(1-2\beta)/(\beta+1)}
\Big| \sup_{[0,T]}|L^x_t-L^0_t\indiq_{\{|x|\leq 1\}}|\dd x\\
\leq& M_{\theta,T} \int_\rr \Big|\gamma^2\e^{(1-2\beta)/(\beta+1)}\phi(\gamma x/\e)
- (\beta+1)^{1/\alpha-2}c_\beta^{1/\alpha}\sg(x)|x|^{(1-2\beta)/(\beta+1)}
\Big| (|x| \land 1)^\theta \dd x
\end{align*}
for any $\theta\in (0,1/2)$. Using the equivalence $\phi(z)\stackrel{|z|\to\infty}\sim 
(\beta+1)^{-2}\sg(z)|z|^{(1-2\beta)/(\beta+1)}$, the bound $|\phi(z)| \leq C(1+|z|^{(1-2\beta)/(\beta+1)})$
and that $(1-2\beta)/(\beta+1)>-3/2$, we conclude, by dominated convergence, that
$\lim_{\e \to 0}\sup_{[0,T]}|\e^{1/\alpha}H^\e_t - (\beta+1)^{-2}K_t|=0$ a.s.
This uses that 
$$
\gamma^{2+(1-2\beta)/(\beta+1)}(\beta+1)^{-2}=(\beta+1)^{1/\alpha-2}c_\beta^{1/\alpha}.
$$

We finally check (c), where $\beta=1$ and $a_\e=\e|\log\e|/2$.
No principal value is needed here and it actually holds true that
$K_t=\lim_{\eta\to 0} \intot \sg(W_s)|W_s|^{-1/2}\indiq_{\{|W_s|\geq \eta\}}\dd s=\intot \sg(W_s)|W_s|^{-1/2}\dd s$.
Also, we have $|\e\log\e|^{3/2} H^\e_t = 4 |\e\log\e|^{-1/2} \intot \phi(2W_s/|\e\log\e|)\dd s$.
Using that $\phi(z)\stackrel{|z|\to \infty}
\sim\sg(z)|z|^{-1/2}/4$, that $|\phi(z)|\leq C |z|^{-1/2}$ and that
$\int_0^T |W_s|^{-1/2}\dd s <\infty$ a.s., 
one verifies, by dominated convergence, that
$$
\lim_{\e \to 0}\sup_{[0,T]} \Big||\e\log\e |^{3/2}H^\e_t - K_t/\sqrt 2\Big| \leq 
\lim_{\e\to 0}\int_0^T \Big|4|\e\log\e |^{-1/2}\phi(2W_s/|\e\log\e|)-\sg(W_s)|W_s|^{-1/2}/\sqrt 2\Big|\dd s=0
$$
a.s., as desired. Such a simple proof can also be handled to check (b) when $\alpha<1$, i.e. $\beta<2$.
\end{proof}

We now give the

\begin{proof}[Proof of Theorem \ref{mr}-(b)-(c)-(d)]
Fix $\beta \in [1,5]$ and a Brownian motion $(W_t)_{t\geq 0}$, denote by $(L^0_t)_{t\geq 0}$ its local time
and by $\tau_t=\inf\{u\geq 0 : L^0_u> t\}$. Consider the process $(K_t)_{t\geq 0}$ defined in Theorem \ref{ss}
with $\alpha=(\beta+1)/3$.
For each $\e>0$, consider the processes $(A^{\e}_t)_{t\geq 0}$,
$(\tau^\e_t)_{t\geq 0}$, $(V^\e_t)_{t\geq 0}$ and $(H^\e_t)_{t\geq 0}$ built in Lemma \ref{ex} 
with the choice $a_\e=\e/[(\beta+1)c_\beta]$ if $\beta\in(1,5]$ and $a_\e=\e |\log \e|/2$ if $\beta=1$.

\vip

{\it Point (b): $\beta=5$.} As seen in Remark \ref{aaa}, we only have to verify that
$\e|\log \e|^{-1} \int_0^{t/\e} [g'(V_s)]^2\dd s$,
which equals  $|\log \e|^{-1} \int_0^{t} [g'(V_{s/\e})]^2\dd s$, goes in probability to $\sigma_5^2 t$
for each $t\geq 0$.
By Lemma \ref{ex}, we may equivalently show that $J^\e_t=|\log \e|^{-1} \int_0^{t} [g'(V_s^\e)]^2\dd s 
\longrightarrow \sigma_5^2 t$. But as usual, 
$$
J^\e_t=\int_0^{t} \frac{[g'(h^{-1}(W_{\tau^\e_s}/a_\e))]^2}{|\log \e|}\dd s=
\int_0^{\tau^\e_t} \frac{\e [g'(h^{-1}(W_{u}/a_\e))]^2}{a_\e^2 |\log \e| [\sigma(W_u/a_\e)]^2} \dd u
=\frac \e {a_\e^2 |\log \e|} \int_0^{\tau^\e_t} \psi(W_{u}/a_\e)\dd u=T^\e_{\tau^\e_t}
$$ 
with the notation of Lemma \ref{tloc}-(e).
By Lemma \ref{tloc}-(a), $\sup_{[0,T]} |A^\e_t- L^0_t|\to 0$ a.s. Since $(\tau_{t})_{t\geq 0}$,
the generalized inverse of $(L^0_t)_{t\geq 0}$, 
has no fixed times of jump, we deduce from Lemma \ref{tc}-(a) that for each $t\geq 0$,
$\tau^\e_t \to \tau_{t}$ a.s. Using now Lemma \ref{tloc}-(d), 
$\sup_{[0,T]} |T^\e_t-\sigma_5^2L^0_t|\to 0$. All in all, for $t\geq 0$ fixed,
$$
|J^\e_t-\sigma_5^2t|\leq |T^\e_{\tau^\e_t}-  \sigma_5^2L^0_{\tau^\e_t}| + |\sigma_5^2L^0_{\tau^\e_t}-\sigma_5^2L^0_{\tau_t}|
+|\sigma_5^2L^0_{\tau_{t}}-\sigma_5^2 t|
$$
which a.s. tends to $0$: for the first term, we use that $\sup_{[0,T]} |T^\e_s-\sigma_5^2L^0_s|\to 0$ a.s.
and that $\sup_{\e\in(0,1)}\tau^\e_t$ is a.s. finite (since it has a finite limit as $\e\to 0$),
for the second one, we use that $(L^0_t)_{t\geq 0}$ is a.s. continuous and that $\tau^\e_t \to \tau_{t}$ a.s.,
for the last one, we use that (for $t\geq 0$ fixed) $L^0_{\tau_t}=t$ a.s.

\vip

{\it Point (c): $\beta\in(1,5)$.}
By Lemma \ref{ci}-(c), we may assume that $X_0=V_0=0$, whence, 
by Lemma \ref{ex}, $(X_{t/\e})_{t\geq 0}\stackrel{d}= (H^\e_{\tau^\e_t})_{t\geq 0}$. To verify that 
$(\e^{1/\alpha} X_{t/\e})_{t\geq 0}\stackrel{f.d.}\to (\sigma_\beta S^{(\alpha)}_t)_{t\geq 0}$, 
it is thus sufficient to verify that
for each $t\geq 0$ fixed, a.s.,
\begin{equation*}
\Delta_t(\e)=|\e^{1/\alpha}H^\e_{\tau^\e_t} - (\beta+1)^{1/\alpha-2}c_\beta^{1/\alpha}K_{\tau_{t}}|\to 0.
\end{equation*}
Indeed, Theorem \ref{ss} tells us that $S^{(\alpha)}_t=\sigma_\beta^{-1}(\beta+1)^{1/\alpha-2}c_\beta^{1/\alpha}K_{\tau_{t}}$ 
is a symmetric $\alpha$-stable process with $\E[\exp(i \xi S^{(\alpha)}_t)]
=\exp(-\kappa_\alpha t |\sigma_\beta^{-1}(\beta+1)^{1/\alpha-2}c_\beta^{1/\alpha}\xi|^\alpha)=\exp(- t |\xi|^\alpha)$ 
by definition of $c_\beta$ and $\kappa_\alpha$.

\vip

By Lemma \ref{tloc}-(a), $\sup_{[0,T]} |A^\e_t- L^0_t|\to 0$ a.s. Since $(\tau_{t})_{t\geq 0}$,
the generalized inverse of $(L^0_t)_{t\geq 0}$,
has no fixed time of jump, we deduce from Lemma \ref{tc}-(a) that
$|\tau^\e_t -\tau_{t}|\to 0$ a.s. By Lemma \ref{tloc}-(b), 
$\lim_{\e\to 0}\sup_{[0,T]} |\e^{1/\alpha}H^\e_t - (\beta+1)^{1/\alpha-2}c_\beta^{1/\alpha}K_t|=0$ a.s. for all $T>0$.
All in all,
$$
\Delta_t(\e)\leq |\e^{1/\alpha}H^\e_{\tau_t^\e} - (\beta+1)^{1/\alpha-2}c_\beta^{1/\alpha} K_{\tau_t^\e}|+
(\beta+1)^{1/\alpha-2}c_\beta^{1/\alpha}|K_{\tau_t^\e} -K_{\tau_t}|,
$$
which a.s. tends to $0$: for the first term, we use that $\lim_{\e\to 0}\sup_{[0,T]} |\e^{1/\alpha}H^\e_s 
- (\beta+1)^{1/\alpha-2}c_\beta^{1/\alpha}K_s|=0$ 
a.s. and that $\sup_{\e\in(0,1)}\tau^\e_t$ is a.s. finite (since it has a finite limit as $\e\to 0$),
for the second one, we use that $(K_t)_{t\geq 0}$ is a.s. continuous and that $\tau^\e_t \to \tau_{t}$ a.s.

\vip
{\it Point (d): $\beta=1$.} By Lemma \ref{ci}-(c), we may assume that $X_0=V_0=0$, whence, 
by Lemma \ref{ex}, $(X_{t/\e})_{t\geq 0}\stackrel{d}= (H^\e_{\tau^\e_t})_{t\geq 0}$. To verify that 
$(|\e\log\e|^{3/2} X_{t/\e})_{t\geq 0}\stackrel{f.d.}\to (\sigma_1 S^{(2/3)}_t)_{t\geq 0}$, 
it is thus sufficient to verify that
for each $t\geq 0$ fixed, a.s.,
\begin{equation*}
\Delta'_t(\e)=\Big||\e\log\e|^{3/2}H^\e_{\tau^\e_t} - K_{\tau_{t}}/\sqrt2\Big|\to 0.
\end{equation*}
Indeed, Theorem \ref{ss} tells us that $(S^{(2/3)}_t)_{t\geq 0}=(\sqrt2\sigma_1)^{-1}K_{\tau_{t}}$ 
is a symmetric $\alpha$-stable process with $\E[\exp(i \xi S^{(2/3)}_t)]
=\exp(-\kappa_{2/3} t |(\sqrt 2\sigma_1)^{-1}\xi|^{2/3})=\exp(- t |\xi|^{2/3})$  
by definition of $\sigma_1$ and $\kappa_{2/3}$. 

\vip

By Lemma \ref{tloc}-(a), $\sup_{[0,T]} |A^\e_t- L^0_t|\to 0$ a.s., whence, by Lemma \ref{tc}-(a),
$|\tau^\e_t -\tau_{t}|\to 0$ a.s. By Lemma \ref{tloc}-(d), 
$\lim_{\e\to 0}\sup_{[0,T]} ||\e\log\e|^{3/2}H^\e_t- K_t/\sqrt 2|=0$ a.s.
Thus
$$
\Delta_t'(\e)\leq \Big||\e\log\e|^{3/2}H^\e_{\tau^\e_t}- K_{\tau^\e_t}/\sqrt 2\Big|
+|K_{\tau^\e_t}-K_{\tau_{t}}|/\sqrt 2\to 0
$$
by continuity of $(K_t)_{t\geq 0}$.
\end{proof}

\subsection{Decoupling}

We end the paper with the

\begin{proof}[Proof of Corollary \ref{coco}]
Let $\beta>1$ be fixed, as well as the solution $(V_t,X_t)_{t\geq 0}$ to \eqref{eds}, starting from some
given initial condition $(V_0,X_0)$ and driven by some Brownian motion $(B_t)_{t\geq 0}$. 
We introduce $\cF_t=\sigma(X_0,V_0,B_s,s\leq t)$.
We know that $(v^{(\beta)}_\e X_{t/\e})_{t\geq 0}\stackrel{f.d.}\to 
(X^{(\beta)}_{t})_{t\geq 0}$, where $v^{(\beta)}_\e\to 0$ and $(X^{(\beta)}_{t})_{t\geq 0}$ are the rate and limit process
appearing in Theorem \ref{mr}. We fix $t>0$, $\varphi \in C^1_b(\rr)$ and  
$\psi \in B_b(\rr)$, and our goal is to verify that, setting $\mu_\beta(\psi)=\int_\rr \psi \dd \mu_\beta$,
as $\e\to 0$,
$$
\Delta_\e=\Big|\E[\varphi(v^{(\beta)}_\e X_{t/\e})\psi(V_{t/\e})] - \E[\varphi(X^{(\beta)}_{t})]\mu_\beta(\psi) \Big| \to 0.
$$

{\it Step 1.} We check here that for any $h\in (0,t)$, $\delta_\e=\E[|\E[\psi(V_{t/\e})|\cF_{(t-h)/\e}]
-\mu_\beta(\psi)|]\to 0$ as $\e\to 0$. We use the common notation notation  
$P_t\psi(v)=\E_v[\psi(V_t)]$ (although we still use $\E$ and $\PR$ without subscript when working
with the initial condition $V_0$). We
introduce the total variation norm  $|| \,\cdot\,||_{TV}$. 
By the Markov property,
$$
\delta_\e=\E[|P_{h/\e}\psi(V_{(t-h)/\e})-\mu_\beta(\psi)|]\leq \delta^1_\e+\delta^2_\e,
$$
where, introducing some $\mu_\beta$-distributed $\bar V$ such that $\PR(\bar V \neq V_{(t-h)/\e})=
||\cL(V_{(t-h)/\e}) - \mu_\beta||_{TV}$,
\begin{align*}
\delta^1_\e =& \E[|P_{h/\e}\psi(V_{(t-h)/\e})- P_{h/\e}\psi(\bar V)|]\leq 2||\psi||_\infty \PR(\bar V \neq V_{(t-h)/\e})=
2||\psi||_\infty||\cL(V_{(t-h)/\e}) - \mu_\beta||_{TV},\\
\delta^2_\e = & \E[|P_{h/\e}\psi(\bar V)-\mu_\beta(\psi)|].
\end{align*}
But, by  Lemma \ref{ci}-(a) and \cite[Lemma 23.17 page 466]{k}, 
$||\cL(V_s) - \mu_\beta||_{TV} \to 0$ (when starting from any initial condition $V_0$) 
as $s\to \infty$. Hence $\delta^1_\e$ tends to $0$ as $\e\to 0$.
We also have $\lim_{s\to \infty} P_s\psi(v)=\mu_\beta(\psi)$ for all $v\in \rr$, so that $\delta^2_\e$
tends to $0$ by dominated convergence.

\vip

{\it Step 2.} For any $h\in (0,t)$, we write $\Delta_\e \leq \Delta_{\e,h}^1+\Delta_{\e,h}^2+\Delta_{\e,h}^3
+\Delta_{\e,h}^4$, where
\begin{align*}
\Delta_{\e,h}^1 =& \Big|\E[\varphi(v^{(\beta)}_\e X_{t/\e})\psi(V_{t/\e})] 
- \E[\varphi(v^{(\beta)}_\e X_{(t-h)/\e})\psi(V_{t/\e})]\Big|,\\
\Delta_{\e,h}^2 =& \Big|\E[\varphi(v^{(\beta)}_\e X_{(t-h)/\e})\psi(V_{t/\e})]
- \E[\varphi(v^{(\beta)}_\e X_{(t-h)/\e})]\mu_\beta(\psi)\Big|,\\
\Delta_{\e,h}^3 =& \Big|\E[\varphi(v^{(\beta)}_\e X_{(t-h)/\e})]\mu_\beta(\psi) 
- \E[\varphi(X^{(\beta)}_{t-h})]\mu_\beta(\psi)\Big|,\\
\Delta_{\e,h}^4 =& \Big|\E[\varphi(X^{(\beta)}_{t-h})]\mu_\beta(\psi)-\E[\varphi(X^{(\beta)}_{t})]\mu_\beta(\psi)\Big|.
\end{align*}
By Theorem \ref{mr}, $\lim_{\e\to 0} \Delta_{\e,h}^3=0$ and, 
with $C=||\psi||_\infty(||\varphi||_\infty+||\varphi'||_\infty )$
$$
\limsup_{\e\to 0} \Delta_{\e,h}^1 \leq C
\limsup_{\e\to 0} \E[|v^{(\beta)}_\e X_{t/\e}-v^{(\beta)}_\e X_{(t-h)/\e}|\land 1]=C
\E[|X^{(\beta)}_{t}-X^{(\beta)}_{t-h}|\land 1].
$$
Also, $\Delta_{\e,h}^4 \leq C \E[|X^{(\beta)}_{t}-X^{(\beta)}_{t-h}|\land 1]$ and
$\Delta_{\e,h}^2\leq ||\varphi||_\infty \E[|\E[\psi(V_{t/\e})|\cF_{(t-h)/\e}]-\mu_\beta(\psi)|] \to 0$
by Step 1. All in all,
$\limsup_{\e\to 0} \Delta_\e \leq 2 C\E[|X^{(\beta)}_{t}-X^{(\beta)}_{t-h}|\land 1]$ 
for any $h\in (0,t)$. Letting $h \downarrow 0$ ends the proof.
\end{proof}

\end{document}